\newcommand{\bracket}[1]{\ensuremath{\left[ #1 \right]}}
\newcommand{\pair}[1]{\ensuremath{\langle #1 \rangle}}
\newcommand{\SO}{\operatorname{SO(3)}}
\newcommand{\so}{\ensuremath{\mathfrak{so}(3)}}
\renewcommand{\Re}{\ensuremath{\mathbb{R}}}
\newcommand{\Cp}{\ensuremath{\mathbb{C}}}
\newcommand{\Sph}{\ensuremath{\mathsf{S}}}
\title{Real Harmonic Analysis on the Special Orthogonal Group}
\author{Taeyoung Lee}
\institute{Taeyoung Lee \at Mechanical and Aerospace Engineering\\
    George Washington University\\
    800 22nd St NW,\\ 
    Washington DC 20052.\\
    Tel.: 1-202-994-8710\\
    \email{tylee@gwu.edu}
}
\date{}
\begin{document}
\allowdisplaybreaks

\maketitle

\begin{abstract}
    This paper presents theoretical analysis and software implementation for real harmonics analysis on the special orthogonal group. 
    Noncommutative harmonic analysis for complex-valued functions on the special orthogonal group has been studied extensively.
    However, it is customary to treat real harmonic analysis as a special case of complex harmonic analysis, and there have been limited results developed specifically for real-valued functions.
    Here, we develop a set of explicit formulas for real-valued irreducible unitary representations on the special orthogonal group, and provide several operational properties, such as derivatives, sampling, and Clebsch-Gordon coefficients. 
    Furthermore, we implement both of complex and real harmonics analysis on the special orthogonal group into an open source software package that utilizes parallel processing through the OpenMP library. 
    The efficacy of the presented results are illustrated by benchmark studies and an application to spherical shape matching.
    \keywords{Fast Fourier transform, special orthogonal group, noncommutative harmonic analysis, spherical harmonics}
    \subclass{22E45, 43A30, 65T99}
\end{abstract}

\section{Introduction}

Noncommutative harmonic analysis is a generalization of Fourier analysis to topological groups that are not necessarily commutative~\cite{GroAMM78}.
According to the Peter-Weyl theorem~\cite{PetWeyMA27}, irreducible unitary matrix representation of a compact group forms an complete orthogonal basis for square-integrable functions on the group. 
Consequently, such functions can be expanded as a linear combination of matrix representations weighted by Fourier parameters. 
In particular, harmonic analysis on the special orthogonal group, or the rotation group, has been utilized in quantum physics~\cite{VarMos88,BieLou81}.
Recently, it also has been applied to various engineering problems in robotics, controls, and machine learning~\cite{ChiKya01,LeeAJDSMC15,CohGeiARX18}. 
Computationally efficient numerical implementation and fast Fourier transform algorithms on the special orthogonal group are considered in~\cite{RisJG96,KosRocJFAA08}.

Despite extensive prior works in broad areas of science and engineering, all of the aforementioned references deal with complex harmonic analysis, where matrix representations and Fourier-parameters are complex-valued. 
There have been limited efforts in noncommutative harmonic analysis for real-valued functions on the special orthogonal group.   
One of the earlier studies in real harmonic analysis include~\cite{SheGriJMS86}, where real-valued matrix representations for several atomic orbitals are evaluated up to a certain order. 
Later, in~\cite{IvaRueJPC96,IvaRueJPC98}, recurrence relations to evaluate real matrix representations are presented in terms of nine elements of a rotation matrix. 
In~\cite{BlaFloJMS97}, real-valued matrix representations are constructed by transforming complex-valued matrix representations, namely wigner D-matrices.

In this paper, we develop a new form of matrix representations for real harmonic analysis on the special orthogonal group. 
Compared with~\cite{IvaRueJPC96}, these are formulated in terms of Euler angles so that a real fast Fourier transform on the special orthogonal group can be developed utilizing various discrete fast Fourier transform algorithms in the Euclidean space. 
The presented form is based on the wigner d-function of the second Euler angle, which can be computed by a recurrence relation. 
But, once the wigner d-function is evaluated, it is explicit with respect to the remaining two other Euler angles. 
Compared with~\cite{BlaFloJMS97}, the presented real-valued matrix representation can be constructed without need for evaluating complex-valued matrix representation, or the wigner D-matrices.

Furthermore, we present several operational properties of real harmonic analysis. 
First, a sample theorem is presented such that Fourier transform of a band-limited function with a bandwidth $B$ can be exactly computed with $(2B)^3$ function evaluations. 
As such, there is no need to approximate an integration over the special orthogonal group with a quadrature rule, when computing Fourier transforms. 
This is utilized to construct a fast Fourier transform for real-valued functions on the special orthogonal group. 
Next, we develop Clebsch-Gordon coefficients for real harmonics so that a product of two real matrix representations is written as a linear combination of other real matrix representations. 
Finally, we show an explicit expression for the derivatives of real matrix representations to formulate representations for lie algebra. 

Beyond these theoretical contributions, an open source software package has been developed for real harmonic analysis on the special orthogonal group. 
This library includes fast Fourier transform, inverse Fourier transform, and evaluation of Clebsch-Gordon coefficients and derivatives. 
While this paper focuses on real harmonic analysis on the special orthogonal group, this software library also provides complex harmonic analysis, including wigner D-functions, and spherical harmonics on the unit-sphere as well.  
There are developed in \texttt{c++} while utilizing the OpenMP library~\cite{DagMenICSE98} to support accelerated parallel computing for multithread processors. 
These are verified by various software unit-testing.
A benchmark study and a particular application to spherical shape matching for Earth topological data are presented as well. 

In short, the main contribution of this paper is theoretical and numerical analysis for the foundation of real harmonic analysis on the special orthogonal group. 
The presented form of real matrix representations and its derivatives, and real Clebsch-Gordon coefficients have been unprecedented. 
Also, the proposed software library can be utilized in various engineering application of noncommutative harmonic analysis on the special orthogonal group.

\section{Complex Harmonic Analysis on $\SO$}\label{sec:CHA}

The three-dimensional special orthogonal group is composed of $3\times 3$ orthogonal matrices with determinant one, i.e.,
\begin{equation}
    \SO = \{ R\in\Re^{3\times 3}\,|\, R^T R= I_{3\times 3},\; \mathrm{det}[R]=1\}.
\end{equation}
Harmonic analysis for complex-valued functions on $\SO$ has been studied extensively, for example in~\cite{ChiKya01,VarMos88,MarPec11}. 
In this section, we summarize selected results that are used for the subsequent  developments of real harmonics on $\SO$.

\subsection{Euler Angles}

Any $R\in\SO$ can be parameterized by three angles $\alpha,\gamma\in[0,2\pi)$ and $\beta\in[0,\pi]$ as
\begin{equation}
    R(\alpha,\beta,\gamma) = \exp(\alpha \hat e_3) \exp(\beta\hat e_2) \exp(\gamma\hat e_3),
\end{equation}
where $e_2=(0,1,0)$, $e_3=(0,0,1)\in\Re^3$, and the hat map $\hat\cdot:\Re^3\rightarrow\so$ is defined such that $x\times y = \hat x y$ for any $x,y\in\Re^3$. 
The Lie algebra is defined as $\so=\{S\in\Re^{3\times 3}\,|\, S+S^T=0\}$.
These are referred to as 3--2--3 Euler angles. 
While this parameterization involves complicated combinations of trigonometric functions and inherent singularities in the resulting kinematics equation, there are several advantages in harmonic analysis, such as convenience in applying fast Fourier transform techniques. 

\subsection{Irreducible Unitary Representation: Wigner $D$-Matrix}

As a transformation group, $\SO$ acts on $\Re^3$ while preserving its inherent structures of $\Re^3$.  
For example, $Rx\in\Re^3$ for any $R\in\SO$ and $x\in\Re^3$ via the matrix multiplication.  
Let $\mathcal{L}^2(\Re^3)$ be the set of complex-valued, square-integrable functions on $\Re^3$.
The left regular representation on $\SO$, namely $D(R)$ for $R\in\SO$ is an operator $D(R): \mathcal{L}^2(\Re^3)\rightarrow \mathcal{L}^2(\Re^3)$ defined such that
\begin{equation}
    (D(R) f)(x)=f( R^T x),\label{eqn:DRf}
\end{equation}
for any $f\in \mathcal{L}^2(\Re^3)$ and $x\in\Re^3$, i.e., it transforms a function such that it becomes equivalent to rotating the input argument by $R^T$. 
It is straightforward to show that $D(\cdot)$ is a \textit{homomorphism}, i.e., $D(R_1)D(R_2)=D(R_1R_2)$ for any $R_1,R_2\in\SO$.
Also, it is a linear operator on $L^2(\Re^3)$. 
Consequently, by selecting a basis on $\mathcal{L}^2(\Re^3)$, $D(R)$ can be represented with a matrix, thereby resulting in \textit{matrix representations}. 

The matrix representation on $\SO$ is often induced from that of $\mathrm{SU(2)}=\{U\in\Cp^{2\times 2}\,|\, U^* U =I_{2\times 2}, \mathrm{det}[U]=1\}$ utilizing one-to-two correspondence between $\SO$ and $\mathrm{SU(2)}$. 
More specifically, consider representation on $\mathrm{SU(2)}$ as an operator on the analytic functions from $\Cp^2$ to $\Cp$. 
By selecting the set of homogeneous polynomials as the basis of the analytic functions, one can derive matrix representation of $\mathrm{SU}(2)$ as
\begin{equation}
    D^l_{m,n}(R(\alpha,\beta,\gamma)) = e^{-im\alpha} d^l_{m,n} (\beta) e^{-in\gamma},\label{eqn:D}
\end{equation}
which is referred to as the \textit{wigner D-matrix} that is common in quantum mechanics~\cite{VarMos88}.
Here the range of the index $l$ is $\{0,\frac{1}{2},1,\frac{3}{2}\ldots \}$, and for each $l$, the indices $m,n$ vary in $\{-l,-l+1,\ldots, l-1, l\}$. 
In \eqref{eqn:D}, the real-valued $d^l_{m,n}$ is called \textit{wigner d-matrix}. 
An explicit form of $d^l_{m,n}$ is tabulated up to $l=5$ in~\cite{VarMos88}, and a recursive algorithm to evaluate it for arbitrary order is presented in~\cite{BlaFloJMS97}.
The above expression for matrix representation $\mathrm{SU(2)}$ results in a matrix representation of $\SO$ by taking the integer values of $l$ only.
Throughout this paper, $D^l(R)\in\Cp^{(2l+1)\times (2l+1)}$ is considered as a square matrix where the row index and the column index are $m$ and $n$, respectively, which vary from $-l$ to $l$ in ascending order. 

From \eqref{eqn:DRf}, it is trivial to show $D^l(I_{3\times 3})= I_{2l+1\times 2l+1}$, or $D^l_{m,n}(I_{3\times 3})=\delta_{m,n}$. 
This representation is \textit{irreducible}, i.e., cannot be block diagonalized consistently with a similarity transform, and  it is \textit{unitary}, i.e., 
\[
    (D^l(R))^*= (D^l(R))^{-1}= D^l(R^T),
\]
where the last equality is from the homomorphism property.
When $\alpha=\gamma=0$, these also imply
\begin{equation}\label{eqn:d_properties}
    d^l(-\beta)=(d^l(\beta))^{-1}= (d^l(\beta))^T,\quad d^l(0)=I_{2l+1\times 2l+1}, \quad d^l_{m,n}(0)=\delta_{m,n}.
\end{equation}
While this paper follows the convention of 3--2--3 Euler angles, using other types of Euler angles in fact does not matter as it would yield an equivalent form of matrix representations that can be constructed by a similarity transform of \eqref{eqn:D}.

\subsection{Fourier Transform on $\SO$}

According to Peter-Weyl theorem, the irreducible, unitary representations form a complete orthogonal basis for $\mathcal{L}^2(\SO)$~\cite{PetWeyMA27}. 
Consequently, any $f\in\mathcal{L}^2(\SO)$ is expanded into
\begin{align}
    f(R(\alpha,\beta,\gamma)) &= \sum_{l=0}^\infty \sum_{m,n=-l}^l (2l+1) F^l_{m,n} D^l_{m,n}(\alpha,\beta,\gamma),\label{eqn:f_IFT}
\end{align}
for Fourier parameters $F^l_{m,n}\in\Cp$.

Define an inner product on $\mathcal{L}^2(\SO)$ as 
\begin{equation}
    \pair{f(R), g(R)} = \int_{\SO} \overline {f(R)} g(R) dR,
\end{equation}
where $dR$ is the Haar measure on $\SO$ that is normalized such that $\int_{\SO} dR =1$. 
For example, using 3--2--3 Euler angles, it is given by $dR = \frac{1}{8\pi^2} \sin\beta d\alpha d\beta d\gamma$. 
The orthogonality of the irreducible, unitary representation implies 
\begin{equation}
    \pair{ D^{l_1}_{m_1,n_1}(R), D^{l_2}_{m_2,n_2}(R)} = \frac{1}{2l_1+1}\delta_{l_1,l_2}\delta_{m_1,m_2}\delta_{n_1,n_2}. \label{eqn:D_ortho}
\end{equation}
Therefore, the Fourier parameters in \eqref{eqn:f_IFT} can be discovered by the inner product,
\begin{equation}
    F^l_{m,n}= \pair{ D^l_{m,n}(R), f(R)}.\label{eqn:FT}
\end{equation}
Equations \eqref{eqn:FT} and \eqref{eqn:f_IFT} are considered as the Fourier transform of $f(R)$, and its inverse transform, respectively. 

A function $f\in\mathcal{L}(\SO)$ is \textit{band-limited with the band $B$}, if its Fourier parameters vanish, i.e., $F^l_{m,n}=0$ for any $l\geq B$. 
The classical sampling theorem states that the Fourier transform of a band-limited function can be recovered from the sample values that are chosen at a uniform grid with a frequency that is at least twice of the band-limit. 
Using this, a fast Fourier transform technique is presented in~\cite{KosRocJFAA08}. 

\subsection{Derivatives of Representation}

Given the matrix representation $D^l(R)$ on the Lie group $\SO$, the representation on the Lie algebra $\so\simeq \Re^3$ is constructed by differentiation.
More specifically, the $l$-th representation, namely $u^l:\Re^3\rightarrow \Cp^{(2l+1)\times (2l+1)}$ is given by
\begin{equation}
    u^l(\eta)= \frac{d}{d\epsilon}\bigg|_{\epsilon=0} D^l(\exp(\epsilon\hat\eta)),\label{eqn:ul}
\end{equation}
for $\eta\in\Re^3$. 
Being a Lie algebra homogeneous, it satisfies $u^l(\eta\times\zeta)=[u^l(\eta),u^l(\zeta)]$, where $\eta,\zeta\in\Re^3$ and $[\cdot,\cdot]$ represents the matrix commutator.

Since it is a linear operator, $u^l(\eta)$ for an arbitrary $\eta\in\Re^3$ is given by a linear combination of $u^l(e_i)$ for $i=\{1,2,3\}$.
Since $\exp(\epsilon\hat e_3) = R(\epsilon,0,0)$,
\begin{align}
    u^l_{m,n}(e_3) & = \frac{d}{d\epsilon}\bigg|_{\epsilon=0} e^{-im\epsilon}d^l_{m,n}(0)=-im\delta_{m,n}.
\end{align}
Similarly, using $\exp(\epsilon\hat e_2) = R(0,\epsilon,0)$, 
\begin{align}
    u^l_{m,n}(e_2) 
    & = -\frac{1}{2}\sqrt{(l+m)(l-m+1)} \delta_{m-1,n}\nonumber\\
    & \quad + \frac{1}{2}\sqrt{(l-m)(l+m+1)} \delta_{m+1,n}.
\end{align}
Last, $u^l(e_1)$ can be obtained by $u^l(e_1)=u^l(e_2\times e_3)=[u^l(e_2), u^l(e_3)]$ as
\begin{align}
    u^l_{m,n}(e_1) 
    & = -\frac{1}{2}i \sqrt{(l+m)(l-m+1)} \delta_{m-1,n}\nonumber\\
    &\quad-\frac{1}{2}i \sqrt{(l-m)(l+m+1)} \delta_{m+1,n}.
\end{align}
These results are useful in engineering application of harmonic analysis, particularly when computing the sensitivity of \eqref{eqn:f_IFT} with respect to $R$.
Similar expressions are presented in~\cite{ChiKya01}, but for a different form of matrix representations.

\subsection{Clebsch-Gordon coefficients}

The product of two wigner D-matrices can be rewritten as a finite linear combination of other wigner D-matrices as follows. 
\begin{equation}
    D^{l_1}_{m_1,n_1} (R) D^{l_2}_{m_2,n_2}(R) = \sum_{l=|l_1-l_2|}^{l_1+l_2} \sum_{m,n=-l}^l C^{l,m}_{l_1,m_1,l_2,m_2} C^{l,n}_{l_1,n_1,l_2,n_2} D^{l}_{m,n}(R).\label{eqn:D1D2_0}
\end{equation}
Interestingly, the coupling coefficients are split into two parts as written above, and they are referred to as Clebsch-Gordon coefficients.
There are several properties of Clebsch-Gordon coefficients listed in~\cite{VarMos88}, including
\begin{gather}
    \sum_{m_1,m_2} C^{l,m}_{l_1,m_1,l_2,m_2} C^{l',m'}_{l_1,m_1,l_2,m_2} = \delta_{l,l'}\delta_{m,m'},\label{eqn:C_ortho_0}\\
    \sum_{l,m} C^{l,m}_{l_1,m_1,l_2,m_2} C^{l,m}_{l_1,m'_1,l_2,m'_2} = \delta_{m_1,m'_1}\delta_{m_2,m'_2},\label{eqn:C_ortho_1}\\
    C^{l,m}_{l_1,m_1,l_2,m_2} = (-1)^{l_1+l_2-l} C^{l,-m}_{l_1,-m_1,l_2,-m_2}, \label{eqn:C_sym}\\
    C^{l,m}_{l_1,m_1,l_2,m_2} = 0 \quad \text{ if } m \neq m_1 + m_2.
\end{gather}
Using the last property, the double summation at \eqref{eqn:D1D2_0} reduces to  
\begin{equation}
    D^{l_1}_{m_1,n_1} (R) D^{l_2}_{m_2,n_2}(R) = \sum_{l=\underline{l}}^{l_1+l_2} C^{l,m_1+m_2}_{l_1,m_1,l_2,m_2} C^{l,n_1+n_2}_{l_1,n_1,l_2,n_2} D^{l}_{m_1+m_2,n_1+n_2}(R).\label{eqn:D1D2}
\end{equation}
for $\underline{l}=\max\{|l_1-l_2|,|m_1+m_2|,|n_1+n_2|\}$.
While \eqref{eqn:D1D2_0} commonly appears in the literature, \eqref{eqn:D1D2} is not available at least in the cited references. 
A computational scheme to evaluate Clebsch-Gordon coefficients is proposed in~\cite{Str14}. 

In~\cite{MarPec11}, it is proposed to rearranged the coefficients into a matrix $C_{l_1,l_2}\in \Re^{(2l_1+1)(2l_2+1)\times(2l_1+1)(2l_2+1)}$ according to the following ordering rules: 
\begin{align}
    (\text{column index of } C^{l,m}_{l_1,m_1,l_2,m_2}) &= l^2-(l_2-l_1)^2 + l + m,\label{eqn:order_scheme_c}\\
    (\text{row index of } C^{l,m}_{l_1,m_1,l_2,m_2}) &= (l_1+m_1)(2l_2+1) + l_2+m_2,\label{eqn:order_scheme_r}
\end{align}
which begins from $0$ following the convention of the C programming language that is adopted for software implementation in this paper. 
Under this matrix formulation, \eqref{eqn:D1D2_0} is rearranged into
\begin{equation}
    D^{l_1}(R) \otimes D^{l_2}(R) = C_{l_1,l_2} \bracket{ \bigoplus_{l=|l_1-l_2|}^{l_1+l_2} D^l(R)} C_{l_1,l_2}^T,\label{eqn:Clebsch_Gordon}
\end{equation}
where $\otimes$ denotes the Kronecker product. 

\subsection{Relation to Spherical Harmonics}

Consider the unit-sphere, $\Sph^2=\{x\in\Re^3\,|\, \|x\|=1\}$. 
Let $x\in\Sph^2$ be parameterized by the co-latitude $\theta\in[0,\pi]$ and the longitude $\phi\in[0,2\pi)$ as $ x(\theta,\phi)=[\cos\phi\sin\theta, \sin\phi\sin\theta, \cos\theta]$. 
Spherical harmonics, namely $Y^l_m(\theta,\phi)$ is defined as
\begin{equation}
    Y^l_m(\theta,\phi) = e^{im\phi} \sqrt{\frac{2l+1}{4\pi}\frac{(l-m)!}{(l+m)!}}  P^m_l(\cos\theta),
\end{equation}
with the associated Legendre polynomials $P^m_l$ for $l\in\{0,1,\ldots\}$ and $-l\leq m\leq l$.

Let $dx=\frac{1}{4\pi} \sin\theta d\phi d\theta$ be the measure of $\Sph^2$, normalized such that $\int_{\Sph^2} dx =1$. 
We define an inner product on the square-integrable functions, namely $\mathcal{L}^2(\Sph^2)$ as
\begin{align*}
    \pair{ f(x), g(x) }_{\mathcal{L}(\Sph^2)} = \int_{\Sph^2} \overline{f(x)}g(x) dx.
\end{align*}
Spherical harmonics satisfies the following orthogonality with respect to the above inner product, yielding
\begin{equation}
    \pair{ Y^{l_1}_{m_1}(x), Y^{l_2}_{m_2}(x)}_{\mathcal{L}(\Sph^2)} = \frac{1}{4\pi} \delta_{l_1l_2} \delta_{m_1m_2}.\label{eqn:Y_ortho}
\end{equation}

Spherical harmonics are closely related to the wigner D-function.
Using the homomorphism property of the group representation~\cite{ChiKya01}, we obtain
\begin{equation}
    Y^{l}_{n}(R^T x) = \sum_{m'} Y^{l}_{m'}(x) D^l_{m',n}(R) .\label{eqn:YRx}
\end{equation}
Therefore, using \eqref{eqn:Y_ortho}, the wigner D-function can be rediscovered from the spherical harmonics as
\begin{equation}
    \pair{Y^{l}_{m}(x), Y^{l}_{n}(R^T x) }_{\mathcal{L}(\Sph^2)} 
       = \frac{1}{4\pi} D^l_{m,n}(R).
\end{equation}
Let $Y^l$ be the $(2l+1)\times 1$ column vector whose elements are composed of $Y^l_m$ for $m\in\{-l,\ldots l\}$ in  ascending order. 
The above equation is rewritten in a matrix form as
\begin{equation}\label{eqn:D_Y}
    \pair{ Y^l(x), (Y^l(R^Tx))^T }_{\mathcal{L}(\Sph^2)} = \frac{1}{4\pi} D^l(R).
\end{equation}

\section{Real Harmonic Analysis on $\SO$}

The objective of this paper is to develop the counterparts of Section 2 for real-valued functions on $\SO$, resulting in real harmonic analysis on $\SO$. 
Instead of formulating as a specialized form of wigner D-matrices, real matrix representations are directly constructed in terms of Euler angles, 
and they are utilized for fast Fourier transform of real-valued functions on $\SO$. 
Further, Clebsch-Gordon coefficients and derivatives are formulated as well. 

\subsection{Real Irreducible Unitary Representations}

We follow the approaches presented in~\cite{BlaFloJMS97}, where real harmonics on $\SO$ is constructed from real spherical harmonics. 
Orthogonal basis for real-valued functions on $\Sph^2$, namely $S^l(x)\in\Re^{2l+1}$ is constructed by the following  transform,
\begin{equation}\label{eqn:YtoS}
    S^l(x) = T^l Y^l(x),
\end{equation}
where $x\in\Sph^2$ and the matrix $T^l\in\Cp^{(2l+1)\times(2l+1)}$ is defined as
\begin{equation}\label{eqn:Tmn}
    T_{m,n} = 
    \begin{cases}
        1 & m=n=0,\\
        0 & |m|\neq |n|,\\
        \frac{(-1)^m}{\sqrt{2}} & m>0, n=m,\\
        \frac{1}{\sqrt{2}} & m>0, n=-m,\\
        \frac{i}{\sqrt{2}} & m<0, n=m,\\
        \frac{-i(-1)^m}{\sqrt{2}} & m<0,n=-m,
    \end{cases}
\end{equation}
or in a matrix form,
\begin{equation}\label{eqn:T}
    T^l=\frac{1}{\sqrt{2}}
    \begin{bmatrix}
        i & 0 & \cdots & 0 & \cdots & 0 & -i(-1)^l\\
        0 & i & \cdots & 0 & \cdots & -i(-1)^{l-1} & 0 \\
        \vdots & \vdots & \ddots &\vdots & \iddots & \vdots & \vdots\\
        0 & 0 & \hdots & \sqrt{2} & \hdots & 0 & 0 \\
        \vdots & \vdots & \iddots &\vdots & \ddots & \vdots & \vdots\\
        0 & 1 & \cdots & 0 & \cdots & (-1)^{l-1} & 0 \\
        1 & 0 & \cdots & 0 & \cdots & 0 & (-1)^l\\
    \end{bmatrix}.
\end{equation}
It is straightforward to show that the columns of $T^l$ are mutually orthonormal, i.e., $T^l$ is unitary so that  
\begin{equation}
    (T^l)^{-1} = (T^l)^* = (\overline{T^l})^T.\label{eqn:T_unitary}
\end{equation}

Let $U^l\in\Re^{(2l+1)\times(2l+1)}$ be the matrix for the $l$-th real harmonics on $\SO$.
Motivated by \eqref{eqn:D_Y}, it is defined as
\begin{equation}\label{eqn:U_S}
    \pair{ S^l(x), (S^l(R^Tx))^T } = \frac{1}{4\pi} U^l(R).
\end{equation}
Substituting \eqref{eqn:YtoS} and rearranging, 
\begin{equation}
    U^l (R) = \overline{T^l} D^l(R) (T^l)^T.\label{eqn:Ul_T}
\end{equation}
This is a homomorphism, i.e., $U^l(R_1R_2)=U^l(R_1)U^l(R_2)$ for any $R_1,R_2\in\SO$. 
Furthermore, it is irreducible and orthogonal, i.e.,  $(U^l(R))^{-1} = (U^l(R))^T = U^l(R^T)$, where the last equality is from the homomorphism property and $U^l(I_{3\times 3})= I_{(2l+1)\times(2l+1)}$.  
Also, similar with \eqref{eqn:YRx} and \eqref{eqn:D_ortho},
\begin{gather}
    S^{l}_{n}(R^T x) = \sum_{m'} S^{l}_{m'}(x) U^l_{m',n}(R) ,\label{eqn:SRx}\\
    \pair{ U^{l_1}_{m_1,n_1}(R) , U^{l_2}_{m_2,n_2} (R) } =  \frac{1}{2l_1+1} \delta_{l_1,l_2}\delta_{m_1,m_2} \delta_{n_1,n_2}.\label{eqn:U_ortho}
\end{gather}

While $U^l(R)$ can be evaluated by transforming $D^l(R)$ according to \eqref{eqn:Ul_T} as in~\cite{BlaFloJMS97}, the procedure will involve unnecessary steps with complex variables. 
Here, we present an alternative, explicit formulation as follows. 
\begin{theorem}
    Real harmonics on $\SO$ defined in~\eqref{eqn:Ul_T} is equivalent to the following formulations. 
    \begin{align}\label{eqn:U_mn}
        U^l_{m,n} (R) =
        \begin{cases}
            -\sin m\alpha \sin n\gamma \Psi^l_{-m,n} (\beta) + \cos m\alpha \cos n\gamma \Psi^l_{m,n}(\beta) \\
            \hspace*{0.3\textwidth} (m\geq 0, n\geq 0) \text{ or } (m<0,n<0),\\
            -\sin m\alpha \cos n\gamma \Psi^l_{-m,n} (\beta) + \cos m\alpha \sin n\gamma \Psi^l_{m,n}(\beta) \\
            \hspace*{0.3\textwidth} (m\geq 0, n < 0) \text{ or } (m<0,n\geq0),
        \end{cases}
    \end{align}
    where
    \begin{align}\label{eqn:Psi}
        \Psi^l_{m,n}(\beta) & = 
        \begin{cases}
            (-1)^{m-n} d^l_{|m|,|n|}(\beta) + (-1)^m\mathrm{sgn}(m) d^l_{|m|,-|n|}(\beta) & mn\neq 0,\\
            (-1)^{m-n} \sqrt{2} d^l_{|m|,|n|}(\beta) & m = 0 \text{ xor } n=0,\\
            d^l_{0,0}(\beta) & m=n=0,
        \end{cases}
    \end{align}
    satisfying $\Psi^l_{m,n}=\Psi^l_{m,-n}$. Or in a matrix formulation,
    \begin{equation}\label{eqn:Ul}
        U^l (R) = X^l(\alpha) W^l(\beta) X^l(\gamma),
    \end{equation}
    where $X^l(\alpha), W^l(\beta)\in\Re^{(2l+1)\times(2l+1)}$ are defined as
    \begin{align}
        X^l_{m,n}(\alpha) & = \begin{cases}
            0 & |m|\neq |n|,\\
            1 & m=n=0,\\
            \cos m\alpha & m=n\neq 0,\\
            -\sin m\alpha & m=-n\neq 0.
        \end{cases}\label{eqn:X}\\
        W^l_{m,n}(\beta) & = \begin{cases}
            \Psi^l_{m,n}(\beta) & (m\geq 0,n\geq 0) \text{ or } (m<0,n<0)\\
            0 & \text{otherwise}.
        \end{cases}\label{eqn:W}
    \end{align}
\end{theorem}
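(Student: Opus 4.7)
The plan is to factor $D^l(R)$ into its three Euler-angle components, insert resolutions of the identity built from the unitarity of $T^l$, and then identify the three resulting factors with $X^l(\alpha)$, $W^l(\beta)$, and $X^l(\gamma)$ via direct entry-wise computations that exploit the sparsity of $T^l$.

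First I would write $D^l(R(\alpha,\beta,\gamma)) = \Delta(\alpha)\, d^l(\beta)\, \Delta(\gamma)$, where $\Delta(\alpha)$ is the diagonal matrix with $\Delta_{m,m}(\alpha) = e^{-im\alpha}$, as directly read off from \eqref{eqn:D}. Transposing the unitarity identity $\overline{T^l}^T T^l = I$ from \eqref{eqn:T_unitary} gives $(T^l)^T \overline{T^l} = I$. Inserting this identity twice into the definition \eqref{eqn:Ul_T} yields
\begin{equation*}
    U^l(R) = \bracket{\overline{T^l}\Delta(\alpha)(T^l)^T}\bracket{\overline{T^l}d^l(\beta)(T^l)^T}\bracket{\overline{T^l}\Delta(\gamma)(T^l)^T},
\end{equation*}
so it suffices to identify the outer factors with $X^l$ and the middle factor with $W^l$.

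The key observation for the identification step is that row $m$ of $T^l$ has nonzero entries only in columns $\pm|m|$, so each $(m,n)$ entry of $\overline{T^l} M (T^l)^T$ reduces to a sum of at most four products of $T^l$-entries with $M$-entries indexed by $\{\pm|m|\}\times\{\pm|n|\}$. For $M = \Delta(\alpha)$, only the diagonal terms survive, forcing $|m|=|n|$; a short case split on the signs of $m, n$ combined with $\cos m\alpha = \tfrac{1}{2}(e^{im\alpha}+e^{-im\alpha})$ and $\sin m\alpha = \tfrac{1}{2i}(e^{im\alpha}-e^{-im\alpha})$ then recovers \eqref{eqn:X}. For $M = d^l(\beta)$, the four-term sum produces combinations of $d^l_{\pm|m|,\pm|n|}(\beta)$; applying the Wigner d-matrix symmetry $d^l_{-m,-n}(\beta) = (-1)^{m-n} d^l_{m,n}(\beta)$ collapses them into $\Psi^l_{m,n}(\beta)$ of \eqref{eqn:Psi}, while the imaginary factors in the rows with $m<0$ of $T^l$ make the mixed-sign-block contributions (one of $m,n$ nonnegative, the other negative) cancel identically, reproducing the block pattern \eqref{eqn:W}. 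The cases $m=0$ or $n=0$ need a brief separate treatment since the corresponding row or column of $T^l$ has a single entry of value $1$ instead of a pair of $\pm 1/\sqrt{2}$ entries, and this accounts for the middle branch of \eqref{eqn:Psi}.

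Finally, to extract \eqref{eqn:U_mn} from \eqref{eqn:Ul}, I would expand the scalar $(m,n)$ entry of $X^l(\alpha) W^l(\beta) X^l(\gamma)$ using the same two-nonzero-per-row sparsity of $X^l$: the result is a sum of four products, each of the form (entry of $X^l(\alpha)$)(entry of $W^l(\beta)$)(entry of $X^l(\gamma)$), with middle factor $\Psi^l_{\pm m,\pm n}(\beta)$ multiplied by $\cos$ or $-\sin$ of $m\alpha$ and $n\gamma$. The trivial parity identity $\Psi^l_{m,n}=\Psi^l_{m,-n}$, which follows directly from \eqref{eqn:Psi} via $(-1)^{-n}=(-1)^n$ for integer $n$, then collapses the four terms into the two stated cases. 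The main obstacle throughout is the bookkeeping in the $W^l$ step: one has to track the $i$-factors and complex conjugations from the $m<0$ rows of $T^l$, invoke the correct Wigner d-matrix symmetry, and verify that the mixed-sign-block contributions cancel exactly; once this is established, the remaining steps are mechanical.
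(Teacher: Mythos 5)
Your proposal is correct, and it organizes the argument differently from the paper. The paper works entry-wise from the start: it expands $U^l_{m,n}(R)=\sum_{p,q}\overline{T}^l_{m,p}T^l_{n,q}D^l_{p,q}(R)$, uses the sparsity of $T^l$ to reduce to at most four terms, substitutes \eqref{eqn:D} together with the symmetry $d^l_{m,n}=(-1)^{m-n}d^l_{-m,-n}$ to obtain \eqref{eqn:U_mn} directly, and only afterwards verifies, by a second entry-wise expansion, that the matrix product $X^l(\alpha)W^l(\beta)X^l(\gamma)$ reproduces the same expression. You instead prove the matrix factorization \eqref{eqn:Ul} first: writing $D^l=\Delta(\alpha)\,d^l(\beta)\,\Delta(\gamma)$ and inserting $(T^l)^T\overline{T^l}=I$ (a valid transpose of \eqref{eqn:T_unitary}) between the factors gives $U^l(R)=[\overline{T^l}\Delta(\alpha)(T^l)^T][\overline{T^l}d^l(\beta)(T^l)^T][\overline{T^l}\Delta(\gamma)(T^l)^T]$, and the three conjugated blocks are identified with $X^l(\alpha)$, $W^l(\beta)$, $X^l(\gamma)$; the scalar formula \eqref{eqn:U_mn} then follows by expanding the sparse triple product, exactly as in the paper's second step. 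The computational content is the same in both routes (sparsity of $T^l$, the four-term sums, the Wigner-$d$ symmetry, separate treatment of zero indices), but your factor-by-factor conjugation makes the three-factor structure of \eqref{eqn:Ul} transparent rather than something to be verified after the fact, at the cost of having to check two conjugation identities (for $\Delta$ and for $d^l$) instead of one combined expansion. One caution: the vanishing of the mixed-sign blocks of $\overline{T^l}d^l(\beta)(T^l)^T$ is not produced by the imaginary phases alone, as your middle paragraph might suggest; it requires invoking $d^l_{-m,n}=(-1)^{m+n}d^l_{m,-n}$ and $d^l_{-m,-n}=(-1)^{m-n}d^l_{m,n}$ so that the four terms cancel pairwise, which you do acknowledge in your closing remarks and which works out correctly.
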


\begin{proof}
Rearranging the matrix multiplication in \eqref{eqn:Ul} into element-wise operations,
\begin{equation*}
    U^l_{m,n}(R) = \sum_{p,q=-l}^l \overline{T}^l_{m,p} T^l_{n,q} D^l_{p,q} (R).
\end{equation*}
From \eqref{eqn:Tmn}, the expression in the summation vanishes when $|p|\neq|m|$ or $|q|\neq|n|$. 
Consequently, the above double summation reduces to the following cases, depending on  whether any sub index is zero or not,
\begin{align*}
    U^l_{0,0} (R) &= D^l_{0,0} (R) = d^l_{0,0}(R),\\
    U^l_{m,0} (R) &= \overline{T}^l_{m,m} D^l_{m,0} (R) + \overline{T}^l_{m,-m} D^l_{-m,0} (R),\\
    U^l_{0,n} (R) &= T^l_{n,n} D^l_{0,n}(R) + T^l_{n,q-n} D^l_{0,-n}(R),\\
    U^l_{m,n} (R) &= \overline{T}^l_{m,m} T^l_{n,n} D^l_{m,n} (R) + \overline{T}^l_{m,m} T^l_{n,-n} D^l_{m,-n} (R)\\
                  &\quad + \overline{T}^l_{m,-m} T^l_{n,n} D^l_{-m,n} (R) + \overline{T}^l_{m,-m} T^l_{n,-n} D^l_{-m,-n} (R).
\end{align*}
for non zero $m,n\in\{-l,\ldots l\}$. 
Throughout the remainder of this proof, we focus on the last case assuming $m,n>0$. 
The results for other cases can be obtained in a similar manner. 
Substituting the values of $T^l_{m,n}$ given in \eqref{eqn:Tmn} for $m,n>0$,
\begin{equation*}
    U^l_{m,n} (R) = \frac{(-1)^{m+n}}{2}  D^l_{m,n} (R) + \frac{(-1)^m}{2}  D^l_{m,-n} (R) +  \frac{(-1)^n}{2} D^l_{-m,n} (R) + \frac{1}{2}  D^l_{-m,-n} (R).
\end{equation*}
We substitute \eqref{eqn:D}, and rearrange it using the symmetry of the wigner d-functions, namely $d^l_{m,n}(\beta) = (-1)^{m-n} d^l_{-m,-n}$, to obtain 
\begin{equation*}
   U^l_{m,n} (R) = (-1)^{m+n} d^l_{m,n}(\beta) \cos (m\alpha + n\gamma) + (-1)^m d^l_{m,-n}(\beta) \cos(m\alpha - n\gamma).
\end{equation*} 
From the definition of $\Psi^l_{m,n}(\beta)$ in \eqref{eqn:Psi} for the case of $m,n>0$ considered here, 
\begin{align*}
    U^l_{m,n} (R) & = \cos m\alpha\cos n\gamma \{(-1)^{m+n} d^l_{m,n}(\beta) +(-1)^m d^l_{m,-n}(\beta)\} \\
    &\quad + \sin m\alpha\sin n\gamma \{-(-1)^{m+n} d^l_{m,n}(\beta)+(-1)^m d^l_{m,-n}(\beta) \} ,
\end{align*}
where the two expression in the braces reduce to $\Psi^l_{m,n}(\beta)$ and $-\Psi^l_{-m,n}(\beta)$, respectively, while yielding \eqref{eqn:U_mn} for $m,n>0$. 
The other remaining cases for \eqref{eqn:U_mn} can be shown in the similar way. 

Next, the matrix product in~\eqref{eqn:Ul} is written as
\begin{equation*}
    U^l_{m,n}(R) = \sum_{p,q=-l}^l X^l_{m,p}(\alpha) W^l_{p,q}(\beta) X^l_{q,n} (\beta).
\end{equation*}
Again, suppose $m,n>0$. 
From \eqref{eqn:X}, the expression in the summation does not vanish only if $p=\pm m$ and $q=\pm n$.
Consequently,
\begin{align*}
    U^l_{m,n}(R) & = X^l_{m,m}(\alpha) W^l_{m,n} (\beta) X^l_{n,n}(\beta) + X^l_{m,m}(\alpha) W^l_{m,-n} (\beta) X^l_{-n,n}(\beta) \\
                 & \quad + X^l_{m,-m}(\alpha) W^l_{-m,n} (\beta) X^l_{n,n}(\beta) + X^l_{m,-m}(\alpha) W^l_{-m,-n} (\beta) X^l_{-n,n}(\beta).
\end{align*}
Substituting \eqref{eqn:X} and \eqref{eqn:W} and using $\Psi^l_{-m,-n}=\Psi^l_{-m,n}$, it is straightforward to show the above reduces to \eqref{eqn:U_mn} for $m,n>0$. 
The other cases can be shown similarly.
$ $\hfill $\blacksquare$ 
\end{proof}

This theorem states that real matrix representation on the special orthogonal group can be constructed directly in terms of Euler angles without need for evaluating complex, wigner D-matrices. 
The expression presented in~\eqref{eqn:U_mn} is composed of sine and cosine terms for multiples of $\alpha,\gamma$, that are similar to those appear in real spherical harmonics, and $\Psi$ terms defined by the wigner d-matrices. 
When written in a matrix form as~\eqref{eqn:Ul}, it is given by a product of three terms, where each term depends on one of Euler-angles.
In contrast to the recursive formulation written in terms of elements of a rotation matrix~\cite{IvaRueJPC96}, this structure is useful for a fast Fourier transform algorithm. 

In particular, when $l=1$, \eqref{eqn:Ul} results in
\begin{align*}
    U^1(R(\alpha,\beta,\gamma)) 
    & = 
    \begin{bmatrix}
        \cos\alpha & 0 & \sin \alpha\\
        0 & 1 & 0\\
        -\sin\alpha & 0 & \cos\alpha
    \end{bmatrix}
    \begin{bmatrix}
        1 & 0 & 0 \\
            0 & \cos\beta & -\sin\beta\\
            0 & \sin\beta & \cos\beta
        \end{bmatrix}
    \begin{bmatrix}
        \cos\gamma & 0 & \sin \gamma\\
        0 & 1 & 0\\
        -\sin\gamma & 0 & \cos\gamma
    \end{bmatrix}\\
    & = \begin{bmatrix} e_3 & e_1 & e_2 \end{bmatrix} R(\alpha,\beta,\gamma) 
    \begin{bmatrix} e_3 & e_1 & e_2 \end{bmatrix}^T.
\end{align*}
In other words, the real matrix representation of the order $l=1$ is similar to the rotation matrix itself. 

\subsection{Fourier Transform on $\SO$}

From Peter-Weyl theorem, real harmonics yield a complete, orthogonal basis on the square-integrable, real-valued functions on $\SO$. 
More explicitly, any real-valued $f\in\mathcal{L}^2(\SO)$ is expanded as
\begin{align}
    f(R(\alpha,\beta,\gamma)) &= \sum_{l=0}^\infty \sum_{m,n=-l}^l (2l+1) F^l_{m,n} U^l_{m,n}(\alpha,\beta,\gamma),\label{eqn:fR_IFT}
\end{align}
for real-valued Fourier parameters $F^l_{m,n}\in\Re$.
From \eqref{eqn:U_ortho}, the Fourier parameters are obtained by
\begin{equation}
    F^l_{m,n}= \pair{ U^l_{m,n}(R), f(R)}.\label{eqn:RFT}
\end{equation}

Next, we present a sampling theorem to evaluate \eqref{eqn:RFT} exactly with a finite number of samples. 
\begin{theorem}
    Consider a band-limited function represented by \eqref{eqn:fR_IFT} where $F^l_{m,n}=0$ for any $l\geq B$. 
    Define a uniform grid for $(\alpha,\beta,\gamma)\in[0,2\pi)\times [0,\pi]\times [0,2\pi)$ as
    \begin{equation}
        \alpha_j=\gamma_j = \frac{\pi}{B}j,\quad \beta_k = \frac{\pi(2k+1)}{4B},
    \end{equation}
    for $j,k\in\{0,\ldots, 2B-1\}$.
    The Fourier parameters for the band-limited function are given by
    \begin{equation}\label{eqn:Flmn}
        F^l_{m,n} = \sum_{j_1,j_2,k=0}^{2B-1} w_k U^l_{m,n}(R(\alpha_{j_1},\beta_k,\gamma_{j_2})) f(R(\alpha_{j_1},\beta_k,\gamma_{j_2})),
    \end{equation}
    where the weighting parameter $w_k\in\Re$ is defined as
    \begin{equation}\label{eqn:wk}
        w_k = \frac{1}{4B^3}\sin\beta_k \sum_{j=0}^{B-1} \frac{1}{2j+1}\sin((2j+1)\beta_k).
    \end{equation}
\end{theorem}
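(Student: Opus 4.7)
The plan is to substitute the band-limited expansion \eqref{eqn:fR_IFT} for $f$ into the right-hand side of \eqref{eqn:Flmn} and reduce the claim to a \emph{discrete} version of the orthogonality relation \eqref{eqn:U_ortho}. Because $F^{l'}_{m',n'} = 0$ for $l' \geq B$, interchanging the (finite) sums yields
\[
\text{RHS of \eqref{eqn:Flmn}} = \sum_{l'=0}^{B-1}\sum_{m',n'=-l'}^{l'} (2l'+1)\, F^{l'}_{m',n'}\, Q^{l,l'}_{m,n,m',n'},
\]
where $Q^{l,l'}_{m,n,m',n'}$ denotes the triple sum $\sum_{j_1,j_2,k} w_k U^l_{m,n}(R_{j_1,k,j_2}) U^{l'}_{m',n'}(R_{j_1,k,j_2})$. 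Matching with the desired output $F^l_{m,n}$ for every band-limited $f$ is equivalent to showing
\[
Q^{l,l'}_{m,n,m',n'} = \tfrac{1}{2l+1}\,\delta_{l,l'}\,\delta_{m,m'}\,\delta_{n,n'} \qquad \text{for } 0 \leq l, l' \leq B-1,
\]
which is a quadrature version of \eqref{eqn:U_ortho}.

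The second step is to exploit the factorization \eqref{eqn:Ul} so that $Q^{l,l'}$ decouples into a product of three one-dimensional discrete sums:
\[
Q^{l,l'}_{m,n,m',n'} = \sum_{p,p',q,q'} \Bigl[\sum_{j_1} X^l_{m,p}(\alpha_{j_1}) X^{l'}_{m',p'}(\alpha_{j_1})\Bigr] \Bigl[\sum_{k} w_k W^l_{p,q}(\beta_k) W^{l'}_{p',q'}(\beta_k)\Bigr] \Bigl[\sum_{j_2} X^l_{q,n}(\gamma_{j_2}) X^{l'}_{q',n'}(\gamma_{j_2})\Bigr].
\]
For the $\alpha$ and $\gamma$ factors, \eqref{eqn:X} shows that the entries are trigonometric monomials $\cos(m\alpha)$, $\sin(m\alpha)$ with $|m| \leq B-1$. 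Products therefore have frequency at most $2B-2$, so the trapezoidal rule at the $2B$ equidistant nodes $\alpha_j = \gamma_j = \pi j / B$ is exact, and the discrete orthogonality
$\tfrac{1}{2B}\sum_{j=0}^{2B-1} e^{\mathrm{i} r \alpha_j} = \delta_{r \bmod 2B, 0}$ forces $|p|=|m|=|m'|$ (and $|q|=|n|=|n'|$), collapsing the $p,p',q,q'$ summations and imposing the Kronecker deltas in $m,m'$ and $n,n'$.

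After Step 2, the problem reduces to establishing that the $\beta$-quadrature with nodes $\beta_k = \pi(2k+1)/(4B)$ and weights $w_k$ given by \eqref{eqn:wk} is exact for integrals of the form $\tfrac{1}{8\pi^2}\int_0^\pi W^l_{p,q}(\beta) W^{l'}_{p,q}(\beta) \sin\beta\, d\beta$, which by \eqref{eqn:Psi}--\eqref{eqn:W} reduces further to exactness on products $d^{l}_{|p|,|q|}(\beta)\, d^{l'}_{|p|,|q|}(\beta) \sin\beta$. Since each wigner d-function $d^l_{m,n}(\beta)$ is a trigonometric polynomial in $\beta$ of degree $\leq l$, such products are trigonometric polynomials of degree at most $l + l' + 1 \leq 2B-1$. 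Thus the theorem is equivalent to the quadrature identity
\[
\sum_{k=0}^{2B-1} w_k \cos(r\beta_k) = \tfrac{1}{8\pi^2}\!\int_0^\pi \cos(r\beta)\sin\beta\,d\beta, \qquad 0 \leq r \leq 2B-1,
\]
and its analog with $\cos$ replaced by $\sin$.

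\textbf{Main obstacle.} The entire argument rests on verifying that the specific weights in \eqref{eqn:wk} satisfy this last identity; this is the genuine content of the sampling theorem, while everything else is bookkeeping. The verification will proceed by inserting the definition of $w_k$, swapping the $j$ and $k$ sums, applying the product-to-sum identity $\sin\beta_k \cos(r\beta_k) = \tfrac{1}{2}[\sin((r+1)\beta_k) - \sin((r-1)\beta_k)]$, and invoking the discrete orthogonality of half-integer sines at midpoint nodes,
\[
\sum_{k=0}^{2B-1} \sin\bigl((2a+1)\beta_k\bigr)\sin\bigl((2b+1)\beta_k\bigr) = B\,\delta_{a,b}, \qquad a,b \in \{0,\ldots,B-1\},
\]
which follows from $\beta_k = \pi(2k+1)/(4B)$. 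The right-hand side of the quadrature identity is then computed from the elementary integral $\int_0^\pi \cos(r\beta)\sin\beta\,d\beta$ and matched to the surviving terms of the sum. Combining this with Steps~1--3 gives the discrete orthogonality $Q^{l,l'}_{m,n,m',n'} = \tfrac{1}{2l+1}\delta_{l,l'}\delta_{m,m'}\delta_{n,n'}$ and hence \eqref{eqn:Flmn}.
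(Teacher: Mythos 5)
Your route is genuinely different from the paper's. The paper never substitutes the expansion of $f$ into \eqref{eqn:Flmn}; instead it introduces a sampling distribution $s(R)=\sum_{j_1,k,j_2}w_k\,\delta_{R,R(\alpha_{j_1},\beta_k,\gamma_{j_2})}$, shows via the $\alpha,\gamma$ sums and the cited Driscoll--Healy weight identity $\sum_k w_k d^l_{0,0}(\beta_k)=\tfrac{1}{4B^2}\delta_{l,0}$ (for $l\le 2B-1$) that $s=1+(\text{terms of degree}\ \ge 2B)$, and then analyzes $g=fs$ with the Clebsch--Gordan product theorem to show the aliasing terms only populate degrees $\ge B+1$, so $F^l=G^l$ for $l\le B-1$. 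Your argument instead reduces the theorem to a discrete orthogonality $Q^{l,l'}_{m,n,m',n'}=\tfrac{1}{2l+1}\delta_{l,l'}\delta_{m,m'}\delta_{n,n'}$, decouples it through \eqref{eqn:Ul}, and proposes to prove the $\beta$-quadrature exactness directly from the discrete orthogonality $\sum_k\sin((2a+1)\beta_k)\sin((2b+1)\beta_k)=B\delta_{a,b}$ rather than citing Driscoll--Healy. What this buys is a self-contained proof that needs neither the Clebsch--Gordan theorem nor an external quadrature result; what it costs is that you must establish exactness of the $w_k$-rule on the whole family of products $\Psi^l_{p,q}\Psi^{l'}_{p',q'}$, whereas the paper only needs it for $d^l_{0,0}(\beta)=P_l(\cos\beta)$.

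There is, however, a concrete flaw in your Step 3 reduction: the ``analog with $\cos$ replaced by $\sin$'' is false, so the verification you outline in Step 4 would fail as stated. Take $B=1$: then $\beta_0=\pi/4$, $\beta_1=3\pi/4$, $w_0=w_1=\tfrac18$, and $\sum_k w_k\sin\beta_k=\tfrac{\sqrt2}{8}$, while the correctly normalized continuous side (the constant is fixed by the $r=0$ case, since $\sum_k w_k=\tfrac{1}{4B^2}$ and $\int_0^\pi\sin\beta\,d\beta=2$, giving $\tfrac{1}{8B^2}$ rather than your $\tfrac{1}{8\pi^2}$) is $\tfrac18\int_0^\pi\sin^2\beta\,d\beta=\tfrac{\pi}{16}\neq\tfrac{\sqrt2}{8}$. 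The rule is exact only on even trigonometric polynomials, i.e.\ polynomials in $\cos\beta$. Fortunately the sine identity is also unnecessary: by the symmetry $d^l_{m,n}(-\beta)=(-1)^{m-n}d^l_{m,n}(\beta)$, every $\Psi^l_{p,q}$ in \eqref{eqn:Psi} has parity $(-1)^{|p|+|q|}$ in $\beta$, and after the $\alpha,\gamma$ collapse the surviving $\beta$-factors are products with $|p|=|p'|$, $|q|=|q'|$, hence even — they are cosine polynomials of degree at most $l+l'\le 2B-2$. So you only need the cosine identity for $r\le 2B-2$ (the odd-$r$ cases vanish on both sides by the symmetry $\beta_{2B-1-k}=\pi-\beta_k$, $w_{2B-1-k}=w_k$), and that identity does follow exactly by your proposed mechanism: insert \eqref{eqn:wk}, use $\sin\beta_k\cos(r\beta_k)=\tfrac12[\sin((r+1)\beta_k)-\sin((r-1)\beta_k)]$, and apply the odd-frequency discrete orthogonality, which matches $\tfrac{1}{8B^2}\int_0^\pi\cos(r\beta)\sin\beta\,d\beta=\tfrac{1}{8B^2}\tfrac{2}{1-r^2}$ for even $r$. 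With that parity observation inserted (and the normalization constants tracked as $B$-dependent, not $\pi$-dependent), your argument closes and constitutes a valid alternative proof.
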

\begin{proof}
Define a sampling distribution as
\begin{equation}\label{eqn:s}
    s(R(\alpha,\beta,\gamma))=\sum_{j_1,k,j_2=0}^{2B-1} w_k \delta_{R(\alpha,\beta,\gamma),R(\alpha_{j_1},\beta_k,\gamma_{j_2})},
\end{equation}
which is the linear combination of grid points weighted by the parameter $w_k$. 
Since $\int_{\SO} f(R) \delta_{R,Q} dR = f(Q)$ for $Q\in\SO$, \eqref{eqn:RFT} yields the Fourier parameters for the sampling distribution as
\begin{equation*}
    S^l_{m,n} = \sum_{j_1,k,j_2=0}^{2B-1} w_k U^l_{m,n}(R(\alpha_{j_1},\beta_k,\gamma_{j_2})).
\end{equation*}
For the selected grid, it is straightforward to show $\sum_{j_1=0}^{2B-1} \sin m\alpha_{j_1} = 0$, $\sum_{j_1=0}^{2B-1} \cos m\alpha_{j_1} = 2B\delta_{m,0}$.
Therefore,  from \eqref{eqn:Psi},
\begin{equation*}
S^l_{m,n}  = 4B^2 \delta_{m,0}\delta_{n,0}\sum_{k=0}^{2B-1} w_k d^l_{0,0}(\beta_k).
\end{equation*}
In~\cite{DriHeaAAM94}, it is shown that the selected weight satisfies
\begin{equation*}
    \sum_{k=0}^{2B-1} w_k d^l_{0,0}(\beta_k) = \frac{1}{4B^2} \delta_{l,0},\quad l=0,\ldots 2B-1.
\end{equation*}
Therefore, the Fourier transform of the sampling distribution reduces to
\begin{equation*}
S^l_{m,n}=\delta_{m,0}\delta_{n,0}\delta_{l,0},\quad l=0,\ldots 2B-1.
\end{equation*}
Thus, the sampling distribution can be expanded as
\begin{equation}\label{eqn:s_FT}
s(R)=1+\sum_{l=2B}^\infty \sum_{m,n=-l}^l S^l_{m,n} U^l_{m,n}(R).
\end{equation}

Next, define $g(R)=f(R) s(R)$. 
From \eqref{eqn:s} and using the property of the delta function, it is straightforward to show that the Fourier parameters for $g(R)$ is given by
\begin{equation*}
    G^l_{m,n} = \sum_{j_1,k,j_2=0}^{2B-1} w_k f(R(\alpha_{j_1},\beta_k,\gamma_{j_2})) 
{U^l_{m,n}(R(\alpha_{j_1},\beta_k,\gamma_{j_2}))}.
\end{equation*}
On the other hand, using  \eqref{eqn:s_FT}, 
\begin{align*}
    g(R)&=f(R)+f(R)\sum_{l=2B}^\infty \sum_{m,n=-l}^l S^l_{m,n} U^l_{m,n}(R).
\end{align*}
Now, we show that the above expression for $g(R)$ yields the Fourier parameters that are identical to those of $f(R)$.
Since $f(R)$ can be expanded as a linear combination of $U^{l_1}$ for $0\leq l_1 \leq B-1$. 
The last term of the above equation is expanded by the product $U^{l_1}U^{l_2}$ with $0\leq l_1 \leq B-1$  and $2B\leq l_2$. 
According to the Clebsch-Gordon theorem, $U^{l_1}U^{l_2}$ is a linear combination of $U^{l_3}$ for $|l_1-l_2|\leq l_3 \leq l_1+l_2$. 
We have $\min|l_1-l_2|=2B-1-B=B+1$. 
As such, $f(R)$ and $g(R)$ share the Fourier coefficients in the given band limit, i.e., $F^l_{m,n}=G^l_{m,n}$ for $l\in\{0,\ldots B-1\}$.
This shows \eqref{eqn:Flmn}.
\hfill $\blacksquare$
\end{proof}

Therefore, Fourier parameters of any band-limited function with the bandwidth $B$ can be computed exactly with $(2B)^3$ samples evaluated at the given grid points. 
Utilizing this, we present a fast Fourier transform. 

\subsection{Fast Fourier Transform on $\SO$}\label{sec:FFT}

Substituting \eqref{eqn:U_mn} into \eqref{eqn:Flmn}, the Fourier transform represented by \eqref{eqn:Flmn} can be executed in the following sequence. 
For each $k\in\{0,\ldots  2B-1\}$, let $\mathbf{F}^{k},\mathbf{G}^k\in\Re^{(2B-1)\times(2B-1)}$ be
\begin{align*}
    \mathbf{F}^k_{m,n}  = \sum_{j_1,j_2=0}^{2B-1}  f(R(\alpha_{j_1}, \beta_k, \gamma_{j_2}))\sin(m\alpha_{j_1}+n\gamma_{j_2}),\\
    \mathbf{G}^k_{m,n}  = \sum_{j_1,j_2=0}^{2B-1}  f(R(\alpha_{j_1}, \beta_k, \gamma_{j_2}))\cos(m\alpha_{j_1}+n\gamma_{j_2}),
\end{align*}
which can be computed by a real-valued fast Fourier transform algorithm developed in $\Re$, such as~\cite{SorJonTAS87}. 
The Fourier parameters defined in~\eqref{eqn:U_mn} are evaluated by
\begin{align*}
    & F^l_{m,n} = \\
    &
    \begin{cases}
        & \sum_{k=0}^{2B-1} w_k \{ -\frac{1}{2}(\mathbf{G}^k_{m,-n}-\mathbf{G}^k_{m,n})\Psi^l_{-m,n}(\beta_k) + \frac{1}{2}(\mathbf{G}^k_{m,-n}+\mathbf{G}^k_{m,n})\Psi^l_{m,n}(\beta_k)\}\\
        & \hfill (m\geq 0,n\geq 0) \text{ or } (m<0,n<0),\\
        & \sum_{k=0}^{2B-1} w_k \{ -\frac{1}{2}(\mathbf{F}^k_{m,n}+\mathbf{F}^k_{m,-n})\Psi^l_{-m,n}(\beta_k) + \frac{1}{2}(\mathbf{F}^k_{m,n}-\mathbf{F}^k_{m,-n})\Psi^l_{m,n}(\beta_k)\}\\
        & \hfill (m\geq 0,n< 0) \text{ or } (m<0,n\geq 0).
    \end{cases}
\end{align*}

\subsection{Derivatives of Representation}

Similar with~\eqref{eqn:ul}, the derivatives of $U^l(R)$ at $R=I_{3\times 3}$ results in the real matrix representation of $\so$. Here we use the same notation as the complex valued case as
\begin{equation}
    u^l(\eta)= \frac{d}{d\epsilon}\bigg|_{\epsilon=0} U^l(\exp(\epsilon\hat\eta)),\label{eqn:ul_real}
\end{equation}
for $\eta\in\Re^3$. 
From the linearity, $u^l(\eta)$ for any $\eta$ can be evaluated by the results of the following theorem. 

\begin{theorem}
    The derivatives of $U^l(R)$ introduced at~\eqref{eqn:ul_real} are given as follows for $\eta\in\{e_1,e_2,e_3\}$.
    \begin{align}
        u^l_{m,n}(e_1) & = \begin{cases}
            \frac{1}{2}(m+n)\sqrt{(l+|m|)(l-|m|+1)} \\
            \hfill (m\geq 2, n=-m+1) \text{ or } (m\leq -2, n=-m-1), \\
            -\frac{1}{2}(m+n)\sqrt{(l-|m|)(l+|m|+1)} \\
            \hfill (1\leq m \leq l-1, n=-m-1) \text{ or } (-l+1\leq m \leq -1, n=-m+1), \\
            \frac{1}{\sqrt{2}} \sqrt{l(l+1)} \hfill  m=-1, n=0,\\
            -\frac{1}{\sqrt{2}} \sqrt{l(l+1)} \hfill m=0, n=-1,\\
            0 \hfill \text{otherwise},
        \end{cases}\label{eqn:ul1}\\
        u^l_{m,n}(e_2) & = \begin{cases}
            \frac{1}{2} \sqrt{(l+|m|)(l-|m|+1)} \\
            \hfill (m\geq 2, n=m-1) \text{ or } (m\leq -2, n=m+1), \\
            -\frac{1}{2} \sqrt{(l-|m|)(l+|m|+1)} \\
            \hfill (1\leq m\leq l-1, n=m+1) \text{ or } (-l+1\leq m\leq -1, n=m-1), \\
            \frac{1}{\sqrt{2}} \sqrt{l(l+1)} 
            \hfill m=1, n=0,\\
            -\frac{1}{\sqrt{2}} \sqrt{l(l+1)} 
            \hfill m=0, n=1,\\
            0 \hfill \text{otherwise},
        \end{cases},\label{eqn:ul2}\\
        u^l_{m,n}(e_3) & =
        \begin{cases}
            -m  & (m=-n) \text{ and } (m \neq 0)\\
            0 & \text{otherwise}
        \end{cases}\label{eqn:ul3}
    \end{align}
\end{theorem}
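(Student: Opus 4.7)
The plan is to exploit the factorization $U^l(R)=X^l(\alpha)W^l(\beta)X^l(\gamma)$ from \eqref{eqn:Ul}, together with the Lie algebra homomorphism property of $u^l$, to obtain the three derivatives in turn. The two axes $e_3$ and $e_2$ correspond to varying a single Euler angle, which reduces the problem to differentiating one factor of the product; the remaining axis $e_1$ is then obtained algebraically.

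First I would treat $u^l(e_3)$. Since $\exp(\epsilon\hat e_3)=R(\epsilon,0,0)$, and since $W^l(0)$ is the identity (because $d^l(0)=I$ by \eqref{eqn:d_properties} forces every entry of $\Psi^l$ at $\beta=0$ to be $\delta_{m,n}$), the factorization collapses to $U^l(\exp(\epsilon\hat e_3))=X^l(\epsilon)$. Differentiating the four cases of \eqref{eqn:X} at $\epsilon=0$, the only surviving contributions are on the anti-diagonal entries with $m=-n\neq0$, giving $-m$, which is precisely \eqref{eqn:ul3}.

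Next I would treat $u^l(e_2)$. Since $\exp(\epsilon\hat e_2)=R(0,\epsilon,0)$ and $X^l(0)=I$, the factorization gives $U^l(\exp(\epsilon\hat e_2))=W^l(\epsilon)$. Differentiating \eqref{eqn:W} and \eqref{eqn:Psi} at $\beta=0$ reduces to computing the derivatives of $d^l_{|m|,|n|}$ and $d^l_{|m|,-|n|}$ at $0$, which are already known from the complex case to be $-\tfrac12\sqrt{(l+m)(l-m+1)}\,\delta_{m-1,n}+\tfrac12\sqrt{(l-m)(l+m+1)}\,\delta_{m+1,n}$. A case split on the signs of $m$ and $n$ (both positive, both negative, exactly one zero, both zero) shows that only pairs $(m,n)$ of equal sign with $|m-n|=1$ yield a nonzero contribution; the ``xor'' case with $m=0$ or $n=0$ contributes the $\pm\tfrac{1}{\sqrt 2}\sqrt{l(l+1)}$ entries at $(1,0)$ and $(0,1)$ because of the extra $\sqrt 2$ factor in \eqref{eqn:Psi}. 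Reading off these contributions reproduces \eqref{eqn:ul2}.

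Finally, to obtain $u^l(e_1)$ I would invoke the fact that $U^l$ is a Lie group homomorphism (already established after \eqref{eqn:Ul_T}); differentiation then makes $u^l$ a Lie algebra homomorphism, so $u^l(e_1)=u^l(e_2\times e_3)=[u^l(e_2),u^l(e_3)]$. Because $u^l(e_3)$ is supported on the anti-diagonal with values $-m\delta_{k,-m}$, the commutator simplifies to
\[
[u^l(e_2),u^l(e_3)]_{m,n}=n\,u^l_{m,-n}(e_2)+m\,u^l_{-m,n}(e_2),
\]
so each nonzero entry of $u^l(e_1)$ sits at $(m,n)$ where the reflected index $-n$ (or reflected row $-m$) falls on the nonzero sub-diagonals of $u^l(e_2)$, i.e.\ on $n=-m\pm1$. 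Substituting the four cases of $u^l(e_2)$ together with the $(\pm1,0),(0,\pm1)$ boundary entries, and simplifying the coefficients $n$ and $m$ into the common factor $\tfrac12(m+n)$, yields \eqref{eqn:ul1}.

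The bulk of the proof is routine, but the main obstacle is the bookkeeping in the case analysis: the piecewise definition of $\Psi^l$ forces separate handling of the interior, boundary ($m=0$ or $n=0$), and corner ($m=n=0$) cases for $u^l(e_2)$, and these sign regimes must be tracked carefully through the commutator for $u^l(e_1)$ so that the various coefficients $(-1)^{m-n}$, $\mathrm{sgn}(m)$, and the $\sqrt{2}$ factors combine consistently into the clean expression stated in the theorem.
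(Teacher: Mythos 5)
Your proposal follows essentially the same route as the paper's proof: the paper also obtains $u^l(e_3)$ by differentiating the explicit representation at $R(\epsilon,0,0)$ (your $X^l(\epsilon)$ computation, since $W^l(0)=I$), states that $u^l(e_2)$ follows "similarly" from $R(0,\epsilon,0)$ — which is exactly your differentiation of $W^l(\epsilon)$ via the known derivative of $d^l$ at zero — and gets $u^l(e_1)$ from the Lie-bracket $[u^l(e_2),u^l(e_3)]$, so your argument is correct in method and, if anything, more explicit than the paper's sketch (only note that $\Psi^l_{m,n}(0)=(-1)^{m-n}\delta_{|m|,|n|}$ rather than $\delta_{m,n}$; the conclusion $W^l(0)=I$ survives because $W^l$ vanishes off the equal-sign blocks). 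One point worth flagging: carrying your commutator identity $u^l_{m,n}(e_1)=n\,u^l_{m,-n}(e_2)+m\,u^l_{-m,n}(e_2)$ through faithfully forces $u^l_{-1,0}(e_1)=-u^l_{1,0}(e_2)=-\tfrac{1}{\sqrt{2}}\sqrt{l(l+1)}$ and $u^l_{0,-1}(e_1)=-u^l_{0,1}(e_2)=+\tfrac{1}{\sqrt{2}}\sqrt{l(l+1)}$, the opposite signs to those printed in \eqref{eqn:ul1} for these two entries, and a direct check with the paper's own $l=1$ identity $U^1(R)=[e_3\ e_1\ e_2]\,R\,[e_3\ e_1\ e_2]^T$ confirms your values, so this discrepancy is an apparent sign typo in the stated theorem rather than a gap in your proof.
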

\begin{proof}
    Since $\exp(\epsilon\hat e_3) = R(\epsilon,0,0)$, from \eqref{eqn:U_mn}, 
\begin{align}
    u^l_{m,n}(e_3) & =
    \begin{cases}
        \frac{d}{d\epsilon}\big|_{\epsilon=0} \cos m\epsilon \Psi^l_{m,n}(0) & (m\geq 0, n\geq 0) \text{ or } (m<0,n<0)\\
        \frac{d}{d\epsilon}\big|_{\epsilon=0} -\sin m\epsilon \Psi^l_{-m,n} (0) & (m\geq 0, n < 0) \text{ or } (m<0,n\geq0)
    \end{cases},\nonumber\\
    & =
    \begin{cases}
        0 & (m\geq 0, n\geq 0) \text{ or } (m<0,n<0)\\
        -m \Psi^l_{-m,n} (0) & (m\geq 0, n < 0) \text{ or } (m<0,n\geq0)
    \end{cases}\label{eqn:u_lmn_3}
\end{align}
According to \eqref{eqn:d_properties}, $d^l_{m,n}(0)=\delta_{m,n}$. Therefore, \eqref{eqn:Psi} yields
    \begin{align*}
        \Psi^l_{m,n}(0) & = 
        \begin{cases}
            (-1)^{m-n} \delta_{|m|,|n|} & mn\neq 0,\\
            0 & m = 0 \text{ xor } n=0,\\
            1 & m=n=0,
        \end{cases}
    \end{align*}
    Substituting these into \eqref{eqn:u_lmn_3}, 
\begin{equation*}
    u^l_{m,n}(e_3) =
    \begin{cases}
        0 & (m\geq 0, n\geq 0) \text{ or } (m \leq 0, n \leq 0)\\
        -m(-1)^{m-n}\delta_{|m|,|n|} & (m > 0, n < 0) \text{ or } (m < 0, n >0),
    \end{cases}
\end{equation*}
which reduces to \eqref{eqn:ul3}. 
The other can be shown similarly, using $\exp(\epsilon\hat e_2) = R(0,\epsilon,0)$, and $u^l(e_1)=u^l(e_2\times e_3)=[u^l(e_2), u^l(e_3)]$.
\hfill $\blacksquare$
\end{proof}

\subsection{Clebsch-Gordon Coefficients}

Here we find the Clebsch-Gordon coefficients for real harmonics. 
The objective is to write a product of two real harminics as a linear combination of other harmonics.
Repeatedly using the property of Kronecker product, namely $(AC)\otimes (BD) = (A\otimes B)(C \otimes D)$ for arbitrary compatible matrices $A,B,C,D$, \eqref{eqn:Ul_T} results in
\begin{equation*}
    U^{l_1}(R) \otimes U^{l_2}(R) = (\overline{T}^{l_1} \otimes \overline{T}^{l_2}) (D^{l_1}(R) \otimes D^{l_2}(R)) (T^{l_1} \otimes   T^{l_2})^T .
\end{equation*}
Substituting \eqref{eqn:Clebsch_Gordon}, this is rearranged into
\begin{equation}
    U^{l_1}(R) \otimes U^{l_2}(R) = c_{l_1,l_2} \bracket{ \bigoplus_{l=|l_1-l_2|}^{l_1+l_2} U^l(R) } \overline{c}_{l_1,l_2}^T,
\end{equation}
where the Clebsch-Gordon matrix for the real harmonics, namely $c_{l_1,l_2}\in \Cp^{(2l_1+1)(2l_2+1)\times(2l_1+1)(2l_2+1)}$ is defined as
\begin{equation}\label{eqn:cb_real}
    c_{l_1,l_2} = (\overline{T}^{l_1} \otimes \overline{T}^{l_2}) C_{l_1,l_2} \bracket{ \bigoplus_{l=|l_1-l_2|}^{l_1+l_2} (T^l)^T }.
\end{equation}
Interestingly, while the Clebsch-Gordon coefficients $C_{l_1,l_2}$ for the complex harmonics are real-valued, those for real harmonics can be complex-valued as the matrix $T^l$ is composed of real or imaginary elements. 
In the element-wise form,
\begin{equation}
    U^{l_1}_{m_1,n_1} (R) U^{l_2}_{m_2,n_2}(R) = \sum_{l=|l_1-l_2|}^{l_1+l_2} \sum_{m,n=-l}^l c^{l,m}_{l_1,m_1,l_2,m_2} \overline{c}^{l,n}_{l_1,n_1,l_2,n_2} U^{l}_{m,n}(R).\label{eqn:Ul1_Ul2}
\end{equation}

The following theorem presents two properties of the real Clebsch-Gordon coefficients, and provides an alternative, simpler method to evaluate~\eqref{eqn:cb_real}.

\begin{theorem}
    The Clebsch-Gordon coefficients defined in \eqref{eqn:cb_real} is unitary, i.e., 
    \begin{equation}
        c_{l_1,l_2} \overline{c}_{l_1,l_2}^T = I. \label{eqn:c_unitary}
    \end{equation}
    They have zero values for the following cases,
\begin{equation}
    c^{l,m}_{l_1,m_1,l_2,m_2} = 0, \text{ if  $|m|\neq |m_1+m_2|$ or $|m|\neq |m_1-m_2|$}.\label{eqn:c_lm_zero}
\end{equation}
The remaining non-zero values for $m\in\{m_1+m_2,-m_1-m_2,m_1-m_2,-m_1+m_2\}$ are evaluated according to Table \ref{tab:cb_real}.
\end{theorem}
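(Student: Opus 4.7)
The strategy is to treat the three claims in order: first the unitarity~\eqref{eqn:c_unitary}, then the sparsity pattern~\eqref{eqn:c_lm_zero}, and finally the explicit values listed in the table. All three follow from the definition~\eqref{eqn:cb_real} together with the unitarity~\eqref{eqn:T_unitary} of $T^l$, the orthogonality of the complex Clebsch--Gordon matrix (which is a real orthogonal matrix by~\eqref{eqn:C_ortho_0}, \eqref{eqn:C_ortho_1}), and the selection rule $C^{l,q}_{l_1,p_1,l_2,p_2}=0$ unless $q=p_1+p_2$.

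For~\eqref{eqn:c_unitary}, I expand $c_{l_1,l_2}\overline{c}_{l_1,l_2}^T$ using~\eqref{eqn:cb_real} and the identity $(AC)\otimes(BD)=(A\otimes B)(C\otimes D)$. Because $C_{l_1,l_2}$ is real, conjugation inside the product only hits the $T$-factors. Unitarity of $T^l$ in the form $(T^l)^T\overline{T^l}=I$ collapses the direct-sum sandwich in the middle to the identity; real orthogonality of $C_{l_1,l_2}$ collapses the next layer; and finally $\overline{T^l}(T^l)^T=I$ collapses the outermost Kronecker factor. This is essentially a three-line computation once the Kronecker/direct-sum algebra is written out.

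For~\eqref{eqn:c_lm_zero}, I read off the $(l_1+m_1,l_2+m_2;l,m)$ entry of~\eqref{eqn:cb_real} as a triple sum
\begin{equation*}
c^{l,m}_{l_1,m_1,l_2,m_2}=\sum_{p_1,p_2,q}\overline{T}^{l_1}_{m_1,p_1}\overline{T}^{l_2}_{m_2,p_2}\,C^{l,q}_{l_1,p_1,l_2,p_2}\,T^l_{m,q}.
\end{equation*}
By~\eqref{eqn:Tmn} the $T$-factors force $p_1\in\{\pm m_1\}$, $p_2\in\{\pm m_2\}$ and $|m|=|q|$, while the selection rule pins $q=p_1+p_2\in\{\pm m_1\pm m_2\}$. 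Combining these gives $|m|\in\{|m_1+m_2|,|m_1-m_2|\}$, which is precisely the complement of~\eqref{eqn:c_lm_zero} (I read the stated condition as "unless $|m|$ equals one of these two values").

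The explicit values in the table come from the same triple sum, but now with the sums actually carried out. For each allowed choice of $m\in\{\pm(m_1+m_2),\pm(m_1-m_2)\}$, I enumerate which $(p_1,p_2)\in\{\pm m_1\}\times\{\pm m_2\}$ contribute (i.e.\ those with $p_1+p_2=\pm m$), substitute the six nonzero cases of $T^l_{\cdot,\cdot}$ from~\eqref{eqn:Tmn}, and use the standard symmetry~\eqref{eqn:C_sym} of the complex coefficients to relate the four possible sign combinations. This is organized by the signs of $m_1$, $m_2$ (positive, negative, zero), since the phase factors $(-1)^{m}$, $i$, and the $\sqrt 2$ on the zero row/column all depend on these. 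The main obstacle is purely bookkeeping: keeping track of the many sign combinations and the factor $(-1)^{l_1+l_2-l}$ that appears whenever~\eqref{eqn:C_sym} is invoked to flip a $C$-coefficient to the form quoted in the table. Once this bookkeeping is done, each table entry drops out as at most two terms, and the overall sign/phase is fixed by the particular $T^l$ entries selected.
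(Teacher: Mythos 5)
Your plan is correct and follows essentially the same route as the paper: unitarity of $c_{l_1,l_2}$ from unitarity of $T^l$ and orthogonality of the real matrix $C_{l_1,l_2}$, the sparsity pattern from the element-wise triple sum combined with the sparsity of $T$ and the selection rule $q=p_1+p_2$, and the table entries by case-by-case substitution of \eqref{eqn:Tmn} together with the symmetry \eqref{eqn:C_sym}. Your reading of \eqref{eqn:c_lm_zero} as ``zero unless $|m|$ equals $|m_1+m_2|$ or $|m_1-m_2|$'' matches the intended meaning, and your level of detail on the table (explicit for representative sign cases, remaining cases analogous) mirrors the paper's treatment.
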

\begin{proof}
    Equation \eqref{eqn:c_unitary} follows from the fact that $C_{l_1,l_2}$ and $T^l$ are unitary as shown at \eqref{eqn:C_ortho_0}, \eqref{eqn:C_ortho_1}, and \eqref{eqn:T_unitary}.
    Next, following the ordering rules \eqref{eqn:order_scheme_c} and \eqref{eqn:order_scheme_r}, \eqref{eqn:cb_real} is rewritten into an element-wise form as
    \begin{equation*}
        c^{l,m}_{l_1,m_1,l_2,m_2} = \sum_{p_1=-l_1}^{l_1} \sum_{p_2 = -l_2}^{l_2} \sum_{p=-l}^l \overline T^{l_1}_{m_1,p_1} \overline T^{l_2}_{m_2,p_2} T^l_{m,p} C^{l,p}_{l_1,p_1,l_2,p_2}.
    \end{equation*}
    From \eqref{eqn:Tmn}, we have $T^l_{m,n}=0$ for $|m|\neq|n|$. 
    Also $C^{l,m}_{l_1,m_1,l_2,m_2}=0$ if $m\neq m_1+m_2$.
    Using these, the triple summation in the above equation reduces to
    \begin{equation*}
        c^{l,m}_{l_1,m_1,l_2,m_2}  = \sum_{p_1\in\{-m_1,m_1\}} \sum_{p_2 =\{ -m_2,m_2\}} \delta_{|m|,|p_1+p_2|} \overline T^{l_1}_{m_1,p_1} \overline T^{l_2}_{m_2,p_2} T^l_{m,p_1+p_2} C^{l,p_1+p_2}_{l_1,p_1,l_2,p_2},
    \end{equation*}
    which follows \eqref{eqn:c_lm_zero}.

    Suppose $m_1,m_2>0$. 
    Equation \eqref{eqn:c_lm_zero} implies $c^{l,m}_{l_1,m_1,l_2,m_2}\neq 0$ when $m=m_1+m_2$ or $m=-m_1-m_2$. 
    For the former, using \eqref{eqn:Tmn} and \eqref{eqn:C_sym},
    \begin{align*}
        c^{l,m_1+m_2}_{l_1,m_1,l_2,m_2} & =  \overline T^{l_1}_{m_1,m_1} \overline T^{l_2}_{m_2,m_2} T^l_{m_1+m_2,m_1+m_2} C^{l,m_1+m_2}_{l_1,m_1,l_2,m_2}\\
                                        & \quad + \overline T^{l_1}_{m_1,-m_1} \overline T^{l_2}_{m_2,-m_2} T^l_{m_1+m_2,-m_1-m_2} C^{l,-m_1-m_2}_{l_1,-m_1,l_2,-m_2} ,\\
                                        & = \frac{1}{\sqrt{8}} ( 1 + (-1)^{l_1+l_2-l}) C^{l,m_1+m_2}_{l_1,m_1,l_2,m_2}.
    \end{align*}
    For the latter, 
    \begin{align*}
        c^{l,-m_1-m_2}_{l_1,m_1,l_2,m_2} & = \overline T^{l_1}_{m_1,-m_1} \overline T^{l_2}_{m_2,-m_2} T^l_{-m_1-m_2,-m_1-m_2} C^{l,-m_1-m_2}_{l_1,-m_1,l_2,-m_2} \nonumber\\
                                         &\quad + \overline T^{l_1}_{m_1,m_1} \overline T^{l_2}_{m_2,m_2} T^l_{-m_1-m_2,m_1+m_2} C^{l,m_1+m_2}_{l_1,m_1,l_2,m_2} ,\\ 
                                         & = \frac{i}{\sqrt{8}} ( 1 - (-1)^{l_1+l_2-l}) C^{l,-m_1-m_2}_{l_1,-m_1,l_2,-m_2}.
    \end{align*}
    These show the shaded cells of Table \ref{tab:cb_real}. 
    Other parts of the table can be shown similarly.\hfill $\blacksquare$ 
\end{proof}

\newcolumntype{C}{>{$\displaystyle} Sc <{$}}
\setlength{\cellspacetoplimit}{5pt}
\setlength{\cellspacebottomlimit}{5pt}

\begin{table}
    \caption{Clebsch-Gordon coefficients for real harmonics}\label{tab:cb_real}
    \begin{center}
        \begin{threeparttable}
        \scriptsize\selectfont
            \begin{tabular}{ CCCC | C|C|C|C}\toprule
                m_1 & m_2 & \shortstack[c]{$m_1+$\\$m_2$} & \shortstack[c]{$m_1-$\\$m_2$} & \dfrac{c^{l,m_1+m_2}_{l_1,m_1,l_2,m_2}}{ C^{m_1+m_2}_{l_1,m_1,l_2,m_2}}  & \dfrac{c^{l,-m_1-m_2}_{l_1,m_1,l_2,m_2}}{ C^{m_1+m_2}_{l_1,m_1,l_2,m_2}}  & \dfrac{c^{l,m_1-m_2}_{l_1,m_1,l_2,m_2}}{ C^{m_1-m_2}_{l_1,m_1,l_2,-m_2}}  & \dfrac{c^{l,-m_1+m_2}_{l_1,m_1,l_2,m_2}}{ C^{m_1-m_2}_{l_1,m_1,l_2,-m_2}}\\
                      & & & & \multicolumn{2}{c|}{$(|m_1+m_2| \leq l \leq l_1+l_2)$} & \multicolumn{2}{c}{$(|m_1-m_2| \leq l \leq l_1+l_2)$} \\ \midrule
                 0 & 0 & 0 & 0 & 1 & 1 & 1 & 1\\\midrule
                 + & 0 & + & + & \frac{1}{2}\eta & \frac{i}{2}\zeta & \frac{1}{2}\eta & \frac{i}{2}\zeta \\\midrule
                 + & + & + & + & \cellcolor[HTML]{F0F0F0}\frac{1}{\sqrt{8}}\eta & \cellcolor[HTML]{F0F0F0}\frac{i}{\sqrt{8}}\zeta & \frac{(-1)^{m_2}}{\sqrt{8}} \eta & \frac{i(-1)^{m_2}}{\sqrt{8}}\zeta \\\hline
                 + & + & + & 0 & \cellcolor[HTML]{F0F0F0}\frac{1}{\sqrt{8}}\eta & \cellcolor[HTML]{F0F0F0}\frac{i}{\sqrt{8}}\zeta & \frac{(-1)^{m_1}}{2}\eta & \frac{(-1)^{m_1}}{2}\eta \\\hline
                 + & + & + & - & \cellcolor[HTML]{F0F0F0}\frac{1}{\sqrt{8}}\eta & \cellcolor[HTML]{F0F0F0}\frac{i}{\sqrt{8}}\zeta & -\frac{i(-1)^{m_1}}{\sqrt{8}}\zeta & \frac{(-1)^{m_1}}{\sqrt{8}} \\\midrule
                 0 & + & + & - & \frac{1}{2}\eta & \frac{i}{2}\zeta & \frac{1}{2}\eta & \frac{i}{2}\zeta \\\midrule
                 - & + & + & - & \frac{i(-1)^{m_1}}{\sqrt{8}}\zeta & -\frac{(-1)^{m_1}}{\sqrt{8}}\eta  & \frac{1}{\sqrt{8}}\eta & \frac{i}{\sqrt{8}}\zeta \\\hline
                 - & + & 0 & - & \frac{i(-1)^{m_1}}{2}\zeta & \frac{i(-1)^{m_1}}{2}\zeta & \frac{1}{\sqrt{8}}\eta & \frac{i}{\sqrt{8}}\zeta\\\hline
                 - & + & - & - & \frac{(-1)^{m_2}}{\sqrt{8}}\eta & \frac{i(-1)^{m_2}}{\sqrt{8}}\zeta &\frac{1}{\sqrt{8}}\eta & \frac{i}{\sqrt{8}}\zeta \\\midrule
                 - & 0 & - & - & \frac{1}{2}\eta & \frac{i}{2}\zeta & \frac{1}{2}\eta & \frac{i}{2}\zeta \\\midrule
                 - & - & - & - & \frac{i}{\sqrt{8}}\zeta  & -\dfrac{1}{\sqrt{8}}\eta & -\frac{i(-1)^{m_2}}{\sqrt{8}}\zeta & \frac{(-1)^{m_2}}{\sqrt{8}}\eta \\\hline
                 - & - & - & 0 & \frac{i}{\sqrt{8}}\zeta  & -\dfrac{1}{\sqrt{8}}\eta & \frac{(-1)^{m_1}}{2}\eta & \frac{(-1)^{m_1}}{2}\eta \\\hline
                 - & - & - & + & \frac{i}{\sqrt{8}}\zeta  & -\dfrac{1}{\sqrt{8}}\eta & \frac{(-1)^{m_1}}{\sqrt{8}}\eta & \frac{i(-1)^{m_1}}{\sqrt{8}} \\\midrule
                 0 & - & - & + & \frac{1}{2}\eta & \frac{i}{2}\zeta & \frac{1}{2}\eta & \frac{i}{2}\zeta \\\midrule
                 + & - & - & + & \frac{(-1)^{m_1}}{\sqrt{8}}\eta & \frac{i(-1)^{m_1}}{\sqrt{8}}\zeta & -\frac{i}{\sqrt{8}}\zeta & \frac{1}{\sqrt{8}}\eta \\\hline
                 + & - & 0 & + & \frac{i(-1)^{m_1}}{2}\zeta & \frac{i (-1)^{m_1}}{2}\zeta &  -\frac{i}{\sqrt{8}}\zeta & \frac{1}{\sqrt{8}}\eta \\\hline
             + & - & + & + & \frac{i(-1)^{m_2}}{\sqrt{8}}\zeta & -\frac{(-1)^{m_2}}{\sqrt{8}}\eta  & -\frac{i}{\sqrt{8}}\zeta & \frac{1}{\sqrt{8}}\eta \\\bottomrule 
            \end{tabular}
            \begin{tablenotes}
            \item $\eta=(-1)^{l_1+l_2-l}+1$,\quad $\zeta=(-1)^{l_1+l_2-l}-1$.
            \end{tablenotes}
        \end{threeparttable}
    \end{center}
\end{table}

From \eqref{eqn:c_lm_zero}, the summation at \eqref{eqn:Ul1_Ul2} reduces to
\begin{equation}
    U^{l_1}_{m_1,n_1} (R) U^{l_2}_{m_2,n_2}(R) =  \sum_{m\in M} \sum_{n\in N} \sum_{l=\max\{|l_1-l_2|,|m|,|n|\}}^{l_1+l_2} c^{l,m}_{l_1,m_1,l_2,m_2} \overline{c}^{l,n}_{l_1,n_1,l_2,n_2} U^{l}_{m,n}(R).\label{eqn:Ul1_Ul2_reduced}
\end{equation}
where
\begin{align*}
    M& =\{m_1+m_2,m_1-m_2,-m_1+m_2,-m_1-m_2\},\\
    N&=\{n_1+n_2,n_1-n_2,-n_1+n_2,-n_1-n_2\}.
\end{align*}
As stated above, the Clebsch-Gordon coefficients for real harmonics is complex in general.
However, by investigating Table \ref{tab:cb_real}, one can show that the product $ c^{l,m}_{l_1,m_1,l_2,m_2} \overline{c}^{l,n}_{l_1,n_1,l_2,n_2}$ in~\eqref{eqn:Ul1_Ul2_reduced} is real always.
This is expected as $U^l_{m,n}(R)$ is real-valued always.

\section{Software Implementation}

The presented real harmonic analysis on $\SO$ has been implemented by C++, and the resulting software package has been disclosed as an Open Source Library  at \url{https://github.com/fdcl-gwu/FFTSO3}.
This provides the following functionality and features:
\begin{itemize}
    \item Complex and real harmonic analysis on $\SO$ 
    \item Complex and real harmonic analysis on $\Sph^2$
    \item Fast Fourier transform based on the EigenFFT library~\cite{eigenweb}
    \item Clebsch-Gordon coefficients / derivatives of harmonics
    \item Multi-core parallel computation with the OpenMP library~\cite{DagMenICSE98}
\end{itemize}

\subsection{Validation}

The software package is validated by software unit-testing implemented via Google Test~\cite{googletest}.
There are several tests to verify that numerical results are consistent with theoretical analysis.
Here we show the results of a particular unit-testing designed to ensure that the composition of the inverse transform and the forward transform yields the identity map on the space of Fourier parameters~\cite{KosRocJFAA08}.

Specifically, we define a function $f(R):\SO\rightarrow \Re$ as the inverse Fourier transform presented in~\eqref{eqn:fR_IFT}. 
We assume that the function is band-limited so that $F^l_{m,n}=0$ any $l\geq B$.
Next, each component of Fourier parameter $F^l_{m,n}$ is randomized by a uniform-distribution in $[-1,1]$.
The corresponding function $f(R)$ is transformed via the fast Fourier transform  described in Section~\ref{sec:FFT}, to compute a new set of Fourier parameters, namely $G^l_{m,n}$. 
Theoretically, $G^l_{m,n}$ should be identical to the original Fourier parameters $F^l_{m,n}$ of $f(R)$, as the composition of the inverse transform and the forward transform is the identity map on the space of Fourier parameters.
Numerical errors of the presented software library are measured by the discrepancy between $F^l_{m,n}$ and $G^l_{m,n}$ calculated as the sum of the matrix norm,
\[
    \mathrm{error} = \sum_{l=0}^{B-1} \| F^l -G^l \|.
\]
Table \ref{tab:err} summarizes the above error for varying band-limits. 
The error increases as the band-limit, since the error measure is defined as the sum over the index $l$. 
However, all of the errors are under an acceptable level relative to the machine precision of double-type variables that the presented software uses. 

\begin{table}[h]
    \caption{Numerical error for the composition of the forward and inverse Fourier transforms}\label{tab:err}
    \begin{center}
        \begin{tabular}{ccccc}\toprule
            $B$ & 8 & 16 & 32 & 64 \\\midrule
            $\mathrm{error}$ & $7.2528\times 10^{-14}$ & $5.8972\times 10^{-13}$ & $4.8600\times 10^{-12}$ & $4.0484\times 10^{-11}$\\\bottomrule
        \end{tabular}
    \end{center}
\end{table}

\subsection{Benchmark}

Next, we check the computational time required to perform Fourier transforms. 
Fast forward Fourier transform is executed for the trace function on $\SO$, with varying bandwidth $B\in\{8,16,32,64,128\}$ and number of threads $N_{thread}\in\{1,2,4,8\}$.
The resulting computation time is measured with Intel i7-6800K 3.4GHz CPU, and the test is repeated 10 times to compute the average computation time.

Figure \ref{fig:fwd} illustrates the computation time with respect to the band-with for several number of threads. 
The computation time increases as $B$ is increased, but it reduces as more threads are available. 
More specifically, the next subfigure shows the speed-up factor, the ratio of the computation time with a single thread to that of multiple threads. 
Ideally, the speed-up factor is identical to the number of threads, as indicated by the dotted line. 
However, due to the overhead in distributing the tasks in multiple threads and collecting results, as well as delay in accessing a shared memory, 
the speed-up factor is often lower than the number of threads in practice. 
It is illustrated that the speed-up factor increases as $B$ is increased, i.e., parallel processing is more beneficial if the bandwidth is greater. 

Figure \ref{fig:inv} is for the inverse transform. 
Compared with the forward transform, only a fraction of time is required. 
As such, the speed-up factor is relatively low even for higher bandwidth. 
At least, they are greater than one, indicating that the computation time is reduced by parallel processing. 

\begin{figure}
    \centerline{
        \subfigure[CPU time (sec)]{
            \includegraphics[height=0.38\textwidth]{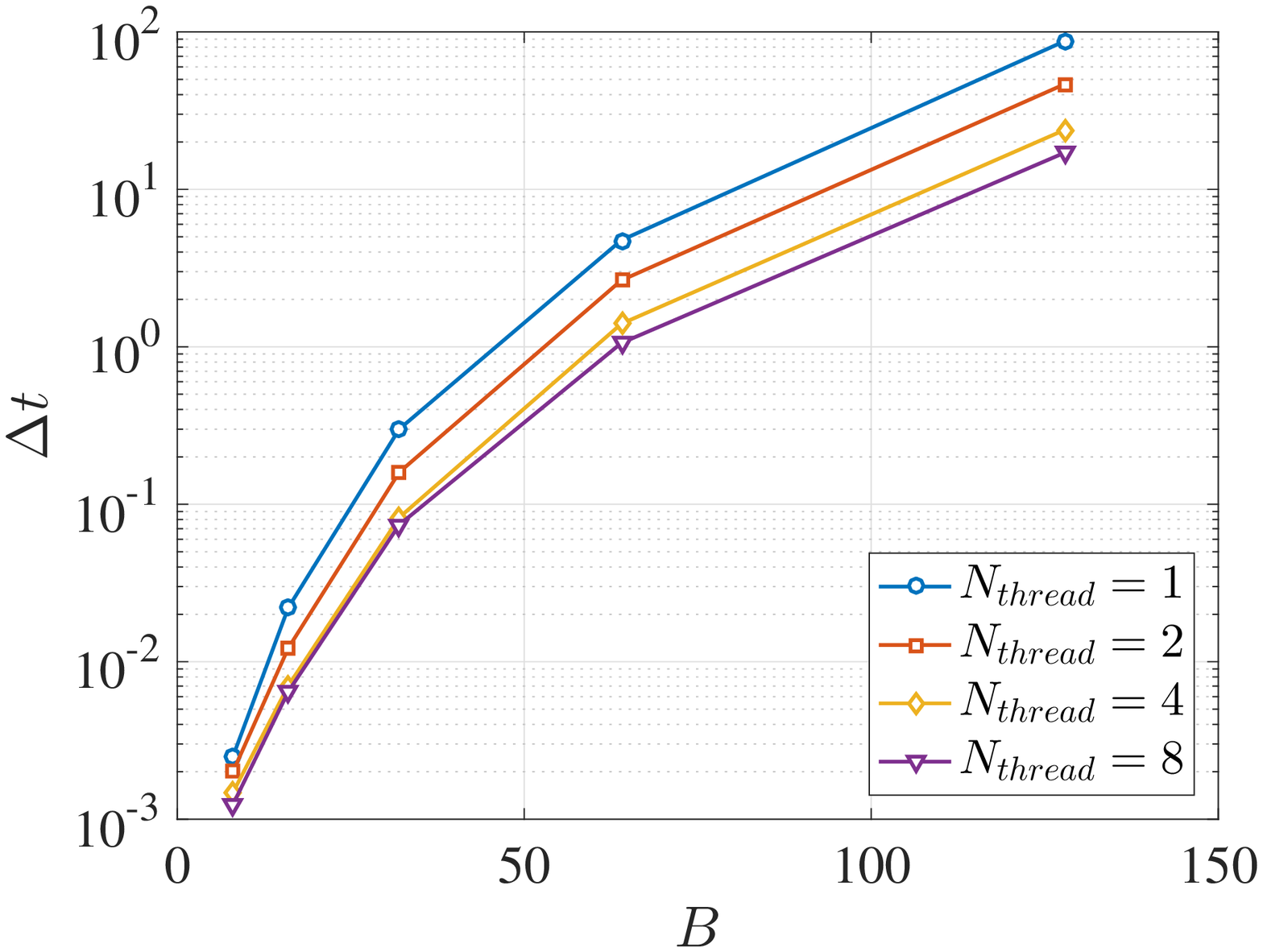}
        }
        \hfill
        \subfigure[Speed-up factor]{
            \includegraphics[height=0.38\textwidth]{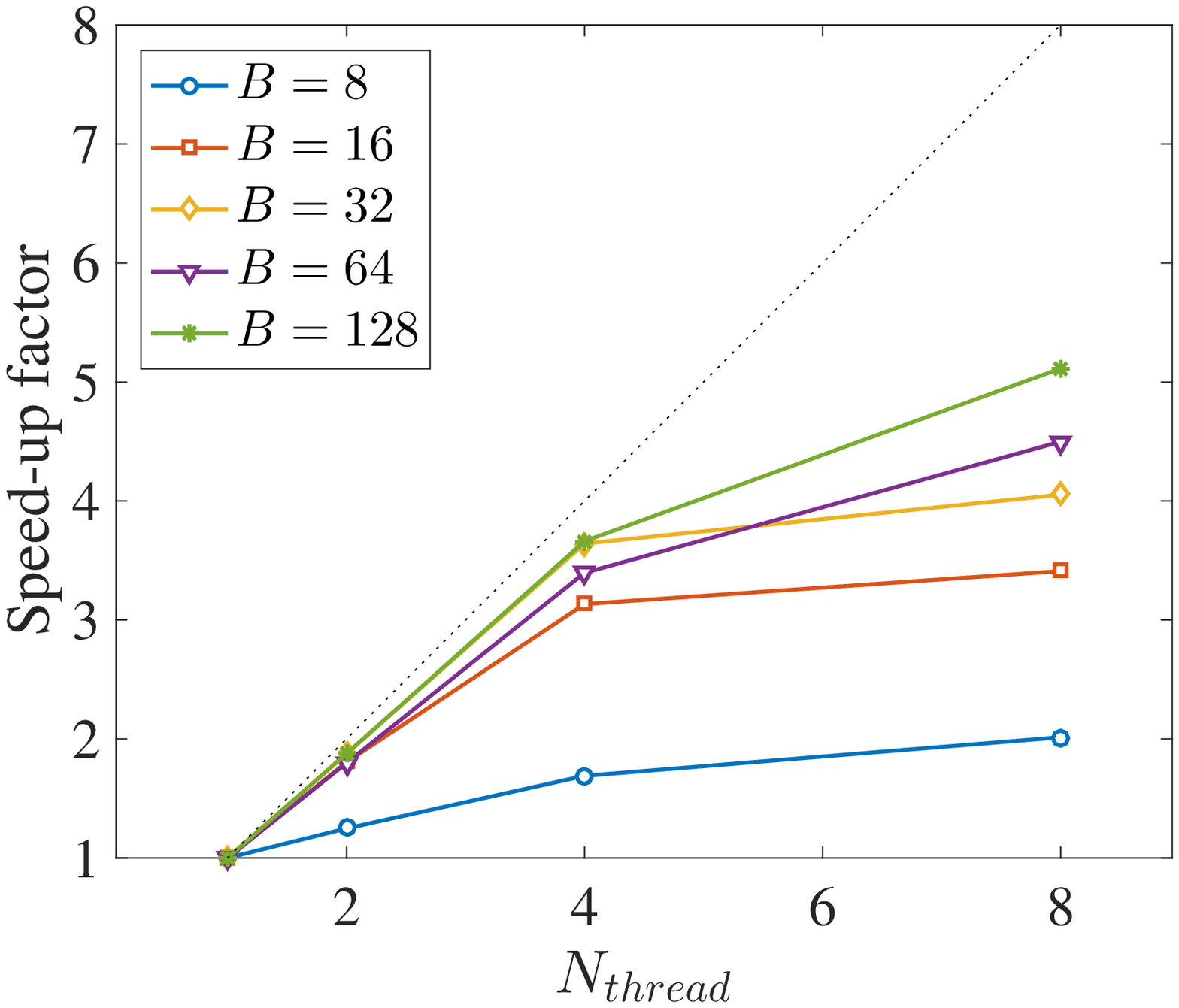}
        }
    }
    \caption{Benchmark results for forward Fourier transform}\label{fig:fwd}
\end{figure}
\begin{figure}
    \centerline{
        \subfigure[CPU time (sec)]{
            \includegraphics[height=0.38\textwidth]{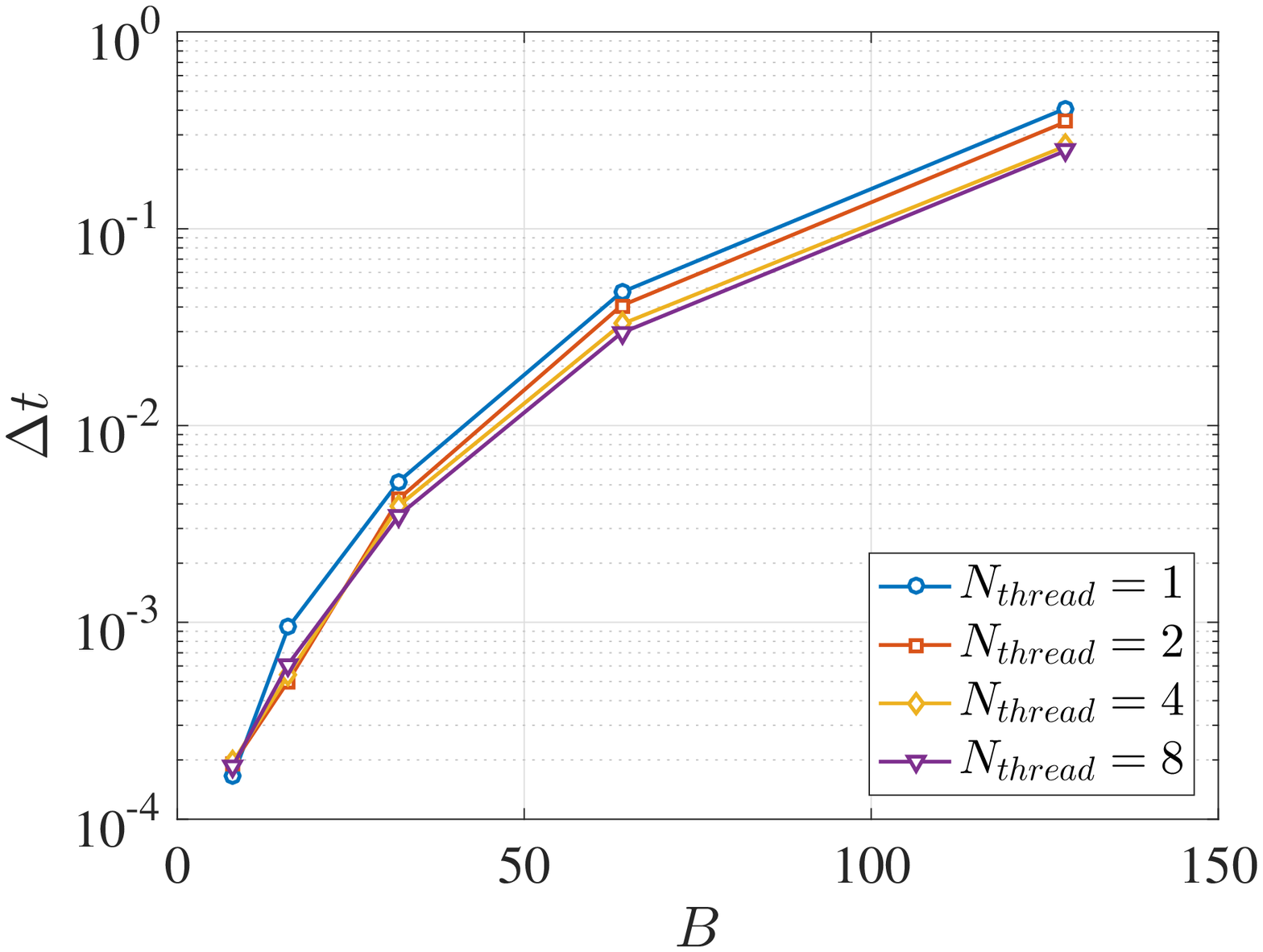}
        }
        \hfill
        \subfigure[Speed-up factor]{
            \includegraphics[height=0.38\textwidth]{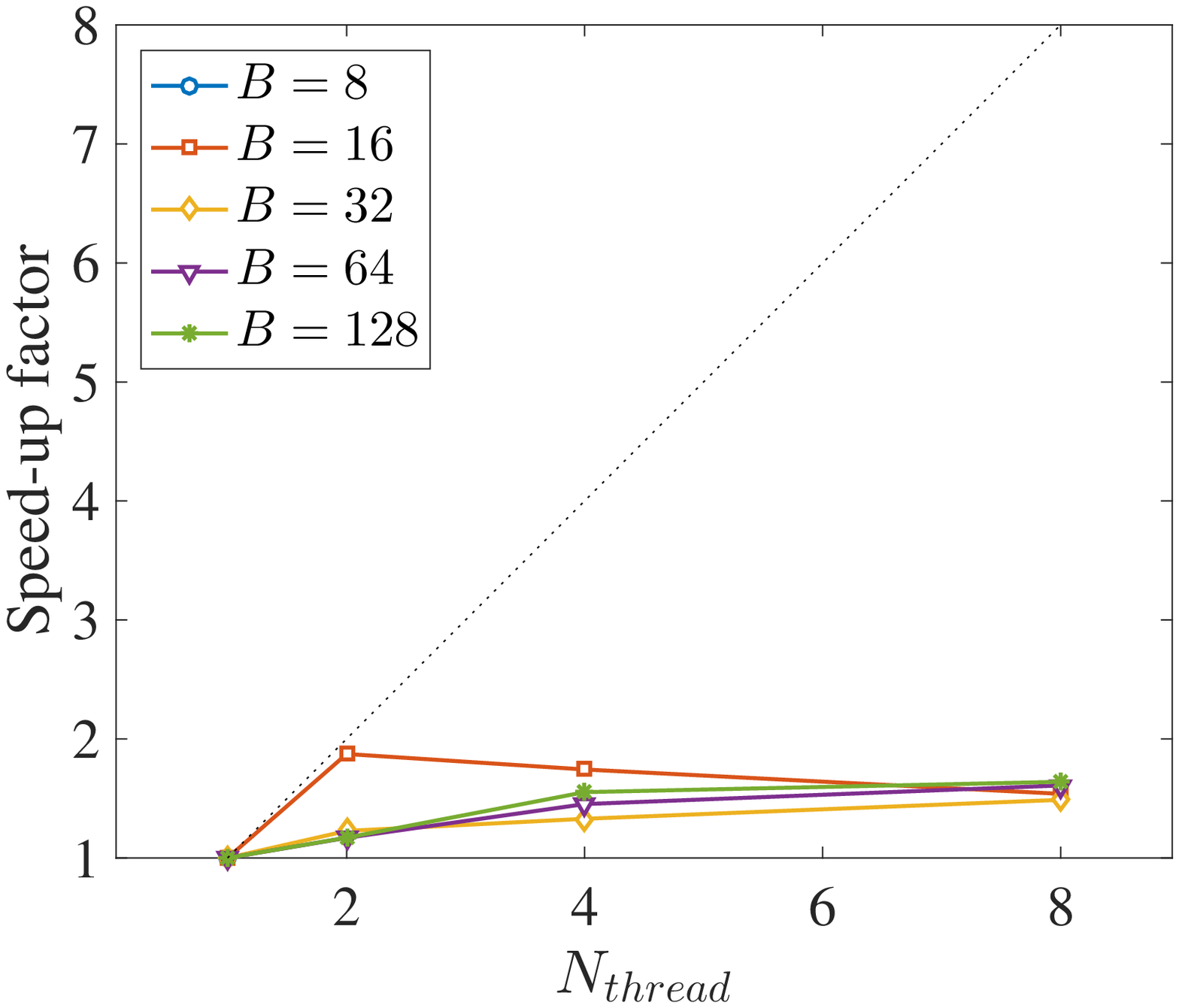}
        }
    }
    \caption{Benchmark results for inverse Fourier transform}\label{fig:inv}
\end{figure}

\section{Application}

\subsection{Spherical Shape Matching}

We apply the above software package for spherical shape correlation and matching. 
Consider an object embedded in $\Re^3$.
Assuming that it has zero genus, i.e., no hole, it's shape can be completely described by deforming a sphere. 
Let $f(x):\Sph^2\rightarrow\Re$ be the deformed radius along the radial direction specified by the unit-vector $x\in\Sph^2$.
Suppose the object is rotated by a rotation described by $R\in\SO$, and let $g(x):\Sph^2\rightarrow\Re$ describe the rotated object. 
The spherical shape matching problem considered in this paper is to find the rotation matrix $R\in\SO$ for given shape functions $f(x)$ and $g(x)$. 

This can be formulated as the following optimization problem on $\SO$.
For a rotation matrix $R$, let its cost be the discrepancy between $g(x)$ and $f(x)$ rotated by $R$, i.e., 
\begin{align*}
    \mathcal{J}(R) & = \frac{1}{2}\| g(x) - f(R^T x) \|^2 \\
                   & = \frac{1}{2} \|g(x)\|^2 + \frac{1}{2}\|f(x)\|^2 -
    \pair{ g(x), f(R^T x) }_{\mathcal{L}(\Sph^2)},
\end{align*}
where we have used the fact $\|f(R^Tx)\|=\|f (x)\|$. 

The true rotation matrix $R$ minimizes the cost function $\mathcal{J}(R)$, or equivalently, it maximizes the measure of correlation defined by $\mathcal{C}(R) = \pair{ g(x), f(R^T x) }_{\mathcal{L}(\Sph^2)}$, i.e., 
\[
    R = \operatorname*{arg\,max} \; \{\mathcal{C}(R)\}.
\]
However, evaluating the above inner product requires an integration over $\Sph^2$, and it may be cumbersome to repeat it at every iteration of numerical optimization. 

Instead, the cost function can be evaluated without any integration, by utilizing the fundamental property of representation given by \eqref{eqn:DRf}.
Let the Fourier transform of $f(x)$ and $g(x)$ with a bandwidth $B$ be
\begin{align*}
    f(x) = \sum_{l=0}^{B-1} (F^l)^T S^l(x),\\
    g(x) = \sum_{l=0}^{B-1} (G^l)^T S^l(x),
\end{align*}
for Fourier parameters $F^l,G^l\in\Re^{(2l+1)\times 1}$. 
From \eqref{eqn:SRx},
\begin{equation*}
    f(R^T x) = \sum_{l=0}^{B-1} (F^l)^T S^l(R^T x) = \sum_{l=0}^{B-1} (U^l(R)F^l)^T S^l(x).
\end{equation*}
The orthogonality of real spherical harmonics implies $\pair{S^{l_1}(x), (S^{l_2}(x))^T }_{\mathcal{L}(\Sph^2)} = \frac{1}{4\pi}\delta_{l_1,l_2}I_{(2l_1+1)\times (2l_1+1)}$. 
Therefore, the correlation function reduces to
\begin{equation}
    \mathcal{C}(R) = \frac{1}{4\pi}\sum_{l=0}^{B-1} (G^l)^T U^l(R) F^l,\label{eqn:cor}
\end{equation}
which is an algebraic equation of Fourier parameters that does not require any integration. 

Furthermore, its gradient can be evaluated by \eqref{eqn:ul1}--\eqref{eqn:ul3} as follow. 
For $\eta = (\eta_1,\eta_2,\eta_3)\in\Re^3$, a directional derivatives of the correlation function is given by
\begin{equation*}
    \frac{d}{d\epsilon} \bigg|_{\epsilon=0} \mathcal{C}(R \exp(\epsilon\hat\eta)))
    =\nabla \mathcal{C}(R)  \cdot \eta,
\end{equation*}
where the gradient is considered as a $3\times 1$ vector, i.e., $\nabla \mathcal{C}(R)\in\Re^3$, whose $i$-th element is given by
\begin{equation}
    [\nabla \mathcal{C}(R)]_i = \frac{1}{4\pi}\sum_{l=0}^{B-1} (G^l)^T U^l(R) u^l(e_i) F^l,\label{eqn:cor_grad}
\end{equation}
for $i\in\{1,2,3\}$.
In the above expression, we have used the homomorphism property $U^l(R\exp(\epsilon\hat\eta)) = U^l(R) U^l(\exp(\epsilon\hat\eta))$. 
Using these, one can apply any gradient-based numerical optimization algorithm, such as one summarized in Table~\ref{tab:SSM}.

\subsection{Earth Elevation Map}

Here we consider a particular example with a worldwide elevation model, namely ETOPO5~\cite{ETOPO5}. 
The topological elevation model is interpolated such that for a given set of latitude and longitude the corresponding elevation is calculated. 
This function yields $f(x)$, which is transformed by spherical harmonics with the bandwidth of $B=129$ to obtain $F^l$. 
We choose a particular rotation matrix with 3-2-3 Euler angle $(\alpha,\beta,\gamma) = (\frac{\pi}{6},\frac{\pi}{3},\frac{\pi}{4})$, and perform the Fourier transform of $f(R^Tx)$ to obtain $G^l$. 
See Figure~\ref{fig:earth} for the original elevation map, and the rotated one. 

The optimization algorithm summarized in Table~\ref{tab:SSM} is implemented with the initial guess $(\alpha,\beta,\gamma)=(0.3,0.3,0.3)$, a tolerance $\epsilon=10^{-6}$ and step-size $\delta=5\times 10^{-3}$. 
The numerical optimization is terminated after 223 iterations, with the maximum absolute error of $7.98\times 10^{-5}$ in terms of Euler angles.
The evolution of the correlation function and its gradient magnitude over iterations, and the change of the rotation matrix in terms of Euler angles are illustrated in Figure~\ref{fig:iter}.

\newcommand{\algrule}[1][.2pt]{\par\vskip.2\baselineskip\hrule height #1\par\vskip.2\baselineskip}

\begin{table}
    \caption{Spherical shape matching algorithm}\label{tab:SSM}
    \begin{algorithmic}[1]
        \algrule[0.8pt]
        \Procedure{Spherical shape matching}{}
        \algrule
        \State Perform Fourier transform of $f(R),g(R)$ to obtain $F^l,G^l$
        \State Make an initial guess of $R$
        \State Set a tolerance $\epsilon>0$ and a step size $\delta >0$
        \Repeat 
        \State $(\mathcal{C},\nabla\mathcal{C})$={\fontshape{sc}\selectfont Correlation}{$(R)$}
        \State Update $R = R \exp(\delta \widehat{\nabla\mathcal{C}})$
        \Until{$\|\nabla\mathcal{C}(R)\|< \epsilon $}
        \State \Return $R$
        \EndProcedure
        \algrule
        \Procedure{$(\mathcal{C},\nabla\mathcal{C})$=Correlation}{$R$}
        \State Compute $\mathcal{C}$ with \eqref{eqn:cor}
        \State Compute $\nabla\mathcal{C}$ with \eqref{eqn:cor_grad}
        \EndProcedure
       \algrule[0.8pt]
    \end{algorithmic}
\end{table}

\begin{figure}
    \centerline{
        \subfigure[Original Earth elevation map]{
            \includegraphics[trim={6cm 3cm 5.5cm 3cm},clip,height=0.38\textwidth]{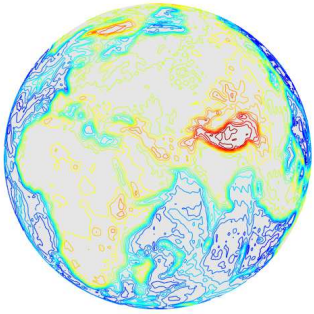}
        }
        \hspace*{0.1\textwidth}
        \subfigure[Rotated Earth elevation map]{
            \includegraphics[trim={6cm 3cm 5.5cm 3cm},clip,height=0.38\textwidth]{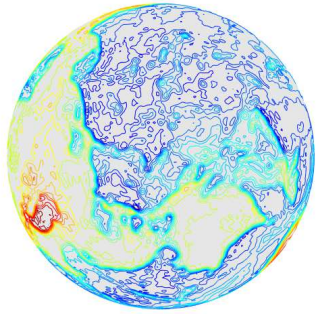}
        }
    }
    \caption{Application to spherical image matching for Earth elevation map}\label{fig:earth}
\end{figure}

\begin{figure}
    \centerline{
        \subfigure[Evolution of correlation function and its gradient over iteration]{
            \includegraphics[height=0.34\textwidth]{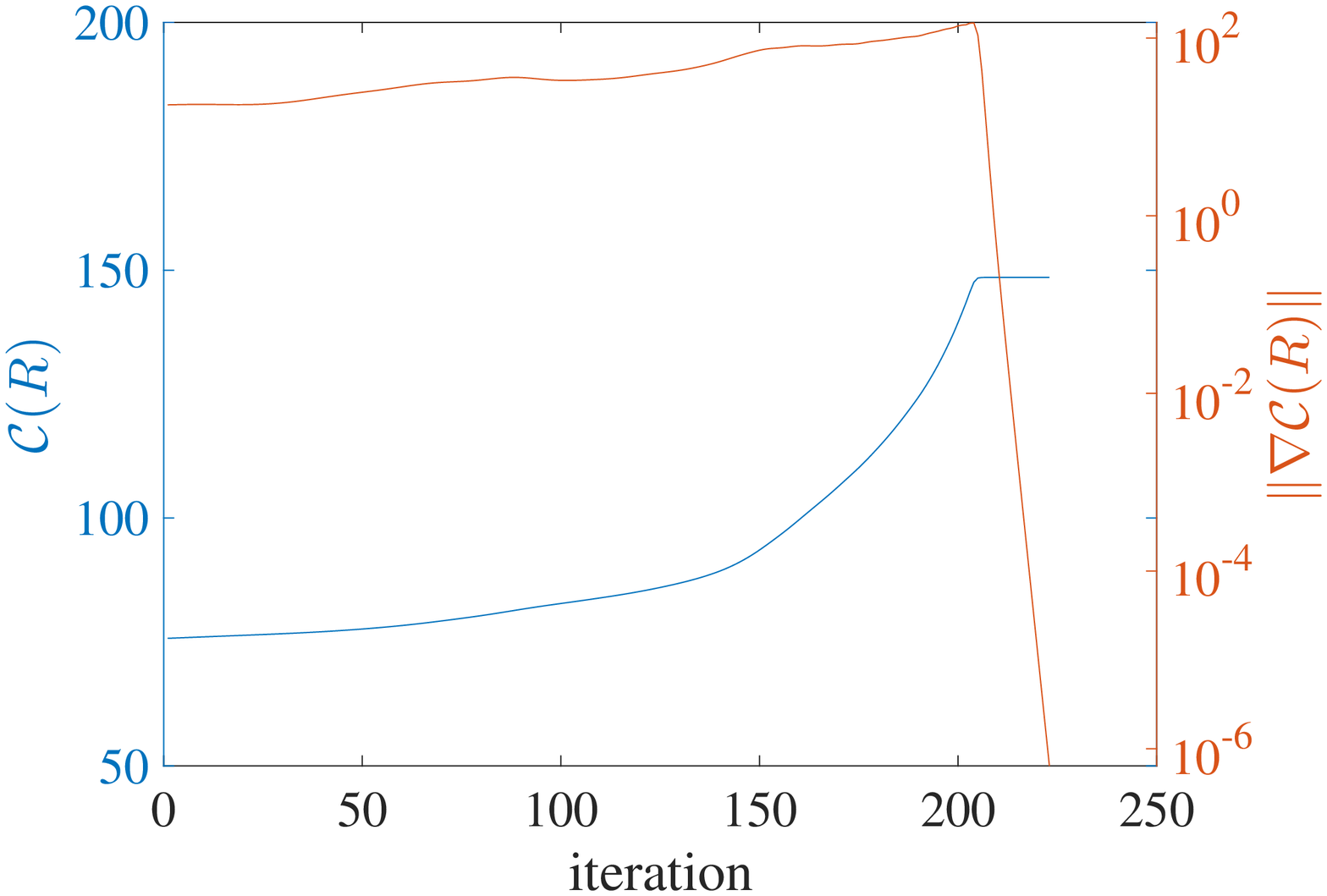}
        }
        \hspace*{0.0\textwidth}
        \subfigure[Evolution of Euler angles over iteration]{
            \includegraphics[height=0.34\textwidth]{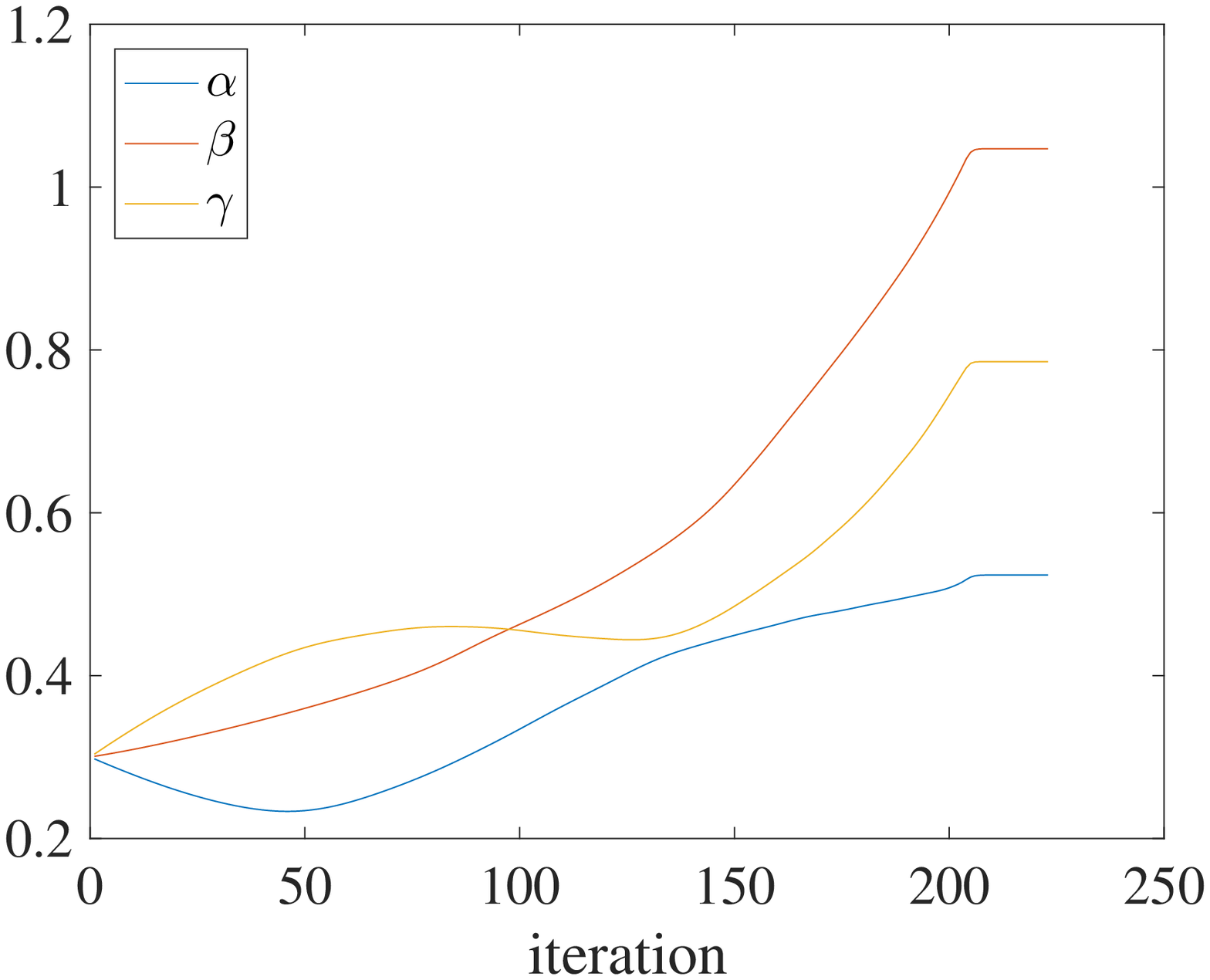}
        }
    }
    \caption{Iteration procedure for spherical shape matching of Earth elevation map}\label{fig:iter}
\end{figure}

\section{Conclusions}

We have presented real harmonic analysis on the special orthogonal group, including various operational properties such as fast Fourier transform, Clebsch-Gordon coefficients, and derivatives. 
There are further implemented into an open source software package supporting parallel processing for accelerated computing. 
In particular, the presented form of real irreducible, unitary representations can be evaluated without constructing their counter parts in complex harmonic analysis, namely wigner D-matrices, and the given formulation in terms of Euler angles are suitable for fast Fourier transform. 

Future works include extension beyond the special orthogonal group, such as the special Euclidean group for various engineering applications regarding the coupled translational and rotational dynamics of a rigid body. 
Also, the intermediate term $\Psi^l_{l,m}(\beta)$ in~\eqref{eqn:Psi} may be directly evaluated without formulating real-valued wigner d-functions.

\begin{acknowledgements}
    This research has been supported in parts by NSF under the grant CMMI-1335008, and by AFOSR under the grant FA9550-18-1-0288.
\end{acknowledgements}


\end{document}